\newcommand{\vertiii}[1]{{\left\vert\kern-0.25ex\left\vert\kern-0.25ex\left\vert #1 
    \right\vert\kern-0.25ex\right\vert\kern-0.25ex\right\vert}}
\newtheorem{theorem}{Theorem}[section]
\newtheorem{lemma}{Lemma}[section]
\newtheorem{remark}{Remark}[section]
\newtheorem{corollary}{Corollary}[section]
\newtheorem{proposition}{Proposition}[section]
\numberwithin{equation}{section}
\begin{document}
\title{Numerical radius bounds via the Euclidean operator radius and norm}
\author{Mohammad Sababheh and Hamid Reza Moradi}
\subjclass[2010]{Primary 47A12; Secondary 47A30, 47A63, 47B15}
\keywords{Euclidean operator radius, numerical radius, norm inequality, operator matrix}

\begin{abstract}
In this paper, we begin by showing a new generalization of the celebrated Cauchy-Schwarz inequality for the inner product. Then, this generalization is used to present some bounds for the Euclidean operator radius and the Euclidean operator norm.

These bounds will be used then to obtain some bounds for the numerical radius in a way that extends many well-known results in many cases.

The obtained results will be compared with the existing literature through numerical examples and rigorous approaches, whoever is applicable. In this context, more than 15 numerical examples will be given to support the advantage of our findings.

Among many consequences, will show that if $T$ is an accretive-dissipative bounded linear operator on a Hilbert space, then ${{\left\| \left( \Re T,\Im T \right) \right\|}_{e}}=\omega \left( T \right)$, where $\omega(\cdot), \|(\cdot,\cdot)\|_e, \Re T$ and $\Im T$ denote, respectively, the numerical radius, the Euclidean norm, the real part and the imaginary part.
\end{abstract}

\maketitle
%------------------------------------------------------------------------------------%
\pagestyle{myheadings}
\markboth{\centerline {}}
{\centerline {}}
\bigskip
\bigskip

\section{Introduction}
Let $\mathbb{B}(\mathbb{H})$ be the $C^*$-algebra of all bounded linear operators on a complex Hilbert space $\mathbb{H}$, and let $\|\cdot\|$ be the operator norm defined by $\|T\|=\sup\limits_{\|x\|=1}\|Tx\|.$ In this context, if $x\in\mathbb{H}$, the quantity $\|x\|$ is defined by $\left<x,x\right>^{\frac{1}{2}},$ where $\left<\cdot,\cdot\right>$ is the inner product defined on $\mathbb{H}.$

An equivalent definition for the operator norm can be stated as $\|T\|=\sup\limits_{\|x\|=\|y\|=1}\left|\left<Tx,y\right>\right|.$ If, in this latter definition, $y=x$, a smaller quantity known as the numerical radius and denoted $\omega(T)$ is obtained. Thus, for $T\in\mathbb{B}(\mathbb{H})$, the numerical radius of $T$ is the scalar quantity $\omega(T)=\sup\limits_{\|x\|=1}\left|\left<Tx,x\right>\right|.$ It is easily verified that $\omega(\cdot)$ defines a norm on $\mathbb{B}(\mathbb{H})$, as well. However, there are major differences between the norm properties of $\omega(\cdot)$ and $\|\cdot\|$. More precisely, the numerical radius is not sub-multiplicative nor unitarily invariant, unlike the operator norm.

Although the definition of $\omega(\cdot)$ seems easier than that of $\|\cdot\|$, the calculations of $\omega(\cdot)$ turn out to be much more complicated. Thus, it has been a key interest in the literature to find some approximate values of $\omega(\cdot)$ in terms of $\|\cdot\|.$ This is usually done through some sharp upper and lower bounds.

In this direction, the relation \cite[Theorem 1.3-1]{Gustafson_Book_1997}
\begin{equation}\label{eq_intro_equiv}
\frac{1}{2}\|T\|\leq\omega(T)\leq\|T\|
\end{equation}
is a basic relation that furnishes the equivalence of the two norms $\omega(\cdot)$ and $\|\cdot\|$.

As the difference between the left and right sides of \eqref{eq_intro_equiv} can be very large, researchers have devoted a considerable effort to finding tighter bounds that could be used for approximation targets or to give new insight into such relations. 

Among the most useful and simple upper bounds in this context, we have
\cite[Ineq. (8)]{1}
\begin{equation}\label{eq_kitt_1}
\omega(T)\leq\frac{1}{2}\left\| \;|T|+|T^*|\;\right\|,
\end{equation}
and
\cite[Theorem 1]{7}
\begin{equation}\label{eq_kitt_2}
\omega^2(T)\leq\frac{1}{2}\left\| |T|^2+|T^*|^2\right\|.
\end{equation}

In fact, using convexity of the function $f(t)=t^2$, it can be seen that \eqref{eq_kitt_1} is sharper than \eqref{eq_kitt_2}. We refer the reader to \cite{MFS, SMF} where this concern is discussed.

In \cite{11}, the notions of the numerical radius and  the operator norm were extended  
for $n$-tuple of operators $\left( {{T}_{1}},{{T}_{2}},\ldots ,{{T}_{n}} \right)$ by the Euclidean operator radius, defined by
\[{{\omega }_{e}}\left( {{T}_{1}},{{T}_{2}},\ldots ,{{T}_{n}} \right)=\underset{\left\| x \right\|=1}{\mathop{\sup }}\,{{\left( \sum\limits_{i=1}^{n}{{{\left| \left\langle {{T}_{i}}x,x \right\rangle  \right|}^{2}}} \right)}^{\frac{1}{2}}},\]
and the Euclidean operator norm, defined by
\[{{\left\| \left( {{T}_{1}},{{T}_{2}},\ldots ,{{T}_{n}} \right) \right\|}_{e}}=\underset{\left( {{\lambda }_{1}},{{\lambda }_{2}},\ldots ,{{\lambda }_{n}} \right)\in {{\mathbb B}_{n}}}{\mathop{\sup }}\,\left\| {{\lambda }_{1}}{{T}_{1}}+{{\lambda }_{2}}{{T}_{2}}+\cdots +{{\lambda }_{n}}{{T}_{n}} \right\|\]
where ${{\mathbb B}_{n}}=\left\{ \left( {{\lambda }_{1}},{{\lambda }_{2}},\ldots ,{{\lambda }_{n}} \right)\in {{\mathbb{C}}^{n}}:\text{ }\sum\nolimits_{i=1}^{n}{{{\left| {{\lambda }_{i}} \right|}^{2}}}\le 1 \right\}$.

It has been shown in \cite[Theorem 6.1]{10} that
\[{{\left\| \left( {{T}_{1}},{{T}_{2}},\ldots ,{{T}_{n}} \right) \right\|}_{e}}=\underset{\left\| x \right\|=\left\| y \right\|=1}{\mathop{\sup }}\,{{\left( \sum\limits_{i=1}^{n}{{{\left| \left\langle {{T}_{i}}x,y \right\rangle  \right|}^{2}}} \right)}^{\frac{1}{2}}},\]
as an alternative formula for $\|\cdot\|_e.$ Both $\omega_e(\cdot)$ and $\|\cdot\|_e$ have been investigated in the literature, as one can see in \cite{Alomari_Georgian_2023, Dragomir_LAA_2006, Moslehian_MathScand_2017}.

One of the most basic yet useful tools for obtaining possible bounds for the numerical radius and the operator norm is the celebrated Cauchy-Schwarz inequality, which states $\left|\left<x,y\right>\right|\leq\|x\|\;\|y\|$, for $x,y\in\mathbb{H}.$ In this paper, as a first contribution, we prove a new generalized form of this inequality that allows obtaining some new and simple relations among $\omega(\cdot), \|\cdot\|, \omega_e(\cdot)$ and $\|\cdot\|_e.$

More precisely, we will show that if $x,y,z\in \mathbb H$, then
\begin{equation*}
{{\left| \left\langle x,y \right\rangle  \right|}^{2}}+{{\left| \left\langle x,z \right\rangle  \right|}^{2}}\le \left\| x \right\|\sqrt{{{\left| \left\langle x,y \right\rangle  \right|}^{2}}{{\left\| y \right\|}^{2}}+{{\left| \left\langle x,z \right\rangle  \right|}^{2}}{{\left\| z \right\|}^{2}}+2\left| \left\langle x,y \right\rangle  \right|\left| \left\langle x,z \right\rangle  \right|\left| \left\langle y,z \right\rangle  \right|}.
\end{equation*}

Many other bounds and forms will be shown and compared with the existing literature. Then, the above inequality will be utilized to obtain some bounds on the numerical radius and the Euclidean operator radius. 

In particular, we find an upper bound of $\omega_e(\cdot,\cdot)$ in terms of $\omega(\cdot)$ (see Theorem \ref{2}), and we compare this bound with the existing bounds. Then, this bound is used to obtain a known bound for $\omega(\cdot)$ (see Corollary \ref{8}). A more elaborated bound for $\omega_e(\cdot)$ will be found in Theorem \ref{7}, with an application to $\omega(\cdot)$ in Corollary \ref{9}. This latter corollary will be utilized to find an upper bound of $\omega\left(\left[\begin{matrix}O&A\\B^*&O\end{matrix}\right]\right)$ that can be better than previously known bounds, as we see in Corollary \ref{19} and Remark \ref{rem_main_block}. Similar discussion of $\|(\cdot,\cdot)\|_e$ will be also presented.
%==================================================================================

Among many other results, the following results are of interest for $A, B,T\in\mathbb{B}(\mathbb{H})$.
\[\omega _{e}^{2}\left( A,B \right)\le \sqrt{\left\| {{\omega }^{2}}\left( A \right){{\left| {{A}^{*}} \right|}^{2}}+{{\omega }^{2}}\left( B \right){{\left| {{B}^{*}} \right|}^{2}} \right\|+2\omega \left( A \right)\omega \left( B \right)\omega \left( A{{B}^{*}} \right)},\]

\[\left\| \left( A,B \right) \right\|_{e}^{2}\le \sqrt{\omega \left( {{\left| A \right|}^{2}}+{\rm i}{{\left| B \right|}^{2}} \right)\omega \left( {{\left| {{A}^{*}} \right|}^{2}}+{\rm i}{{\left| {{B}^{*}} \right|}^{2}} \right)+\omega \left( \left| A \right|+{\rm i}\left| B \right| \right)\omega \left( \left| {{A}^{*}} \right|+{\rm i}\left| {{B}^{*}} \right| \right)\omega \left( B{{A}^{*}} \right)},\]

\[
\omega \left( T \right)\le \frac{1}{2}\sqrt{\sqrt{2}\omega \left( {{\left| T \right|}^{2}}+{\rm i}{{\left| {{T}^{*}} \right|}^{2}} \right)+2\omega \left( {{T}^{2}} \right)},
\]
and
\[\begin{aligned}
   \frac{1}{2}\left\| T \right\|&\le \omega \left( \left[ \begin{matrix}
   O & \Re T  \\
   \Im T & O  \\
\end{matrix} \right] \right) \\ 
 & \le \frac{\sqrt{2}}{2}{{\left\| \left( \Re T,\Im T \right) \right\|}_{e}} \\ 
 & \le \frac{\sqrt{2}}{2}\omega \left( \left| \Re T \right|+{\rm i}\left| \Re T \right| \right) \\ 
 & \le \frac{\sqrt{2}}{2}{{\left\| {{\left( \Re T \right)}^{2}}+{{\left( \Im T \right)}^{2}} \right\|}^{\frac{1}{2}}} \\ 
 & \le \frac{\sqrt{2}}{2}\sqrt{{{\left\| \Re T \right\|}^{2}}+{{\left\| \Im T \right\|}^{2}}} \\ 
 & \le \omega \left( T \right).  
\end{aligned}\]

As mentioned earlier, the significance of the results will be explained through a sequence of remarks with numerous numerical examples.

%========================================================================
Before proceeding to the main results, we recall the reader's attention to some lemmas. The first lemma treats operator matrices, which are operators defined on $\mathbb{H}\oplus \mathbb{H}$. Such operators are usually described by writing $\left[\begin{matrix}A&B\\C&D\end{matrix}\right]$, where $A,B,C,D\in\mathbb{B}(\mathbb{H})$.

\begin{lemma}\label{lem_hirz}
\cite[(4.6)]{8} Let $A,B\in \mathbb B(\mathbb{H})$. Then
\[\omega\left( \left[\begin{matrix}O&A\\B^*&O\end{matrix}\right]     \right)=\frac{1}{2}\sup_{\theta\in\mathbb{R}}\|A+e^{{\rm i}\theta}B\|,\]
where $O$ is the zero operator in $\mathbb{B}(\mathbb{H})$.
In particular (see \cite[Theorem 2.3]{8}),
\[\omega\left( \left[\begin{matrix}O&A\\B^*&O\end{matrix}\right]     \right)\leq\frac{\|A\|+\|B\|}{2}.\]
\end{lemma}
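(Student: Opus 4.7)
The plan is to invoke the well-known identity
\[
\omega(T)=\sup_{\theta\in\mathbb R}\bigl\|\mathrm{Re}(e^{{\rm i}\theta}T)\bigr\|,\qquad \mathrm{Re}(X):=\tfrac12(X+X^*),
\]
and to exploit the very special off-diagonal structure of $T=\left[\begin{smallmatrix}O&A\\B^*&O\end{smallmatrix}\right]$. First I would compute $T^*=\left[\begin{smallmatrix}O&B\\A^*&O\end{smallmatrix}\right]$ and assemble
\[
\mathrm{Re}(e^{{\rm i}\theta}T)=\tfrac12\!\begin{bmatrix}O & e^{{\rm i}\theta}A+e^{-{\rm i}\theta}B\\ (e^{{\rm i}\theta}A+e^{-{\rm i}\theta}B)^* & O\end{bmatrix}.
\]
This is a self-adjoint off-diagonal operator matrix.

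The next key step is the elementary observation that for any $X\in\mathbb B(\mathbb H)$,
\[
\left\|\begin{bmatrix}O & X\\ X^* & O\end{bmatrix}\right\|=\|X\|,
\]
which follows because squaring the matrix produces $\mathrm{diag}(XX^*,X^*X)$ whose norm is $\max(\|XX^*\|,\|X^*X\|)=\|X\|^2$. Applying this with $X=e^{{\rm i}\theta}A+e^{-{\rm i}\theta}B$ yields
\[
\bigl\|\mathrm{Re}(e^{{\rm i}\theta}T)\bigr\|=\tfrac12\bigl\|e^{{\rm i}\theta}A+e^{-{\rm i}\theta}B\bigr\|=\tfrac12\bigl\|A+e^{-2{\rm i}\theta}B\bigr\|,
\]
after pulling out the unimodular scalar $e^{{\rm i}\theta}$. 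Taking the supremum over $\theta\in\mathbb R$, the parameter $-2\theta$ still ranges over all of $\mathbb R$, which produces exactly $\frac12\sup_{\theta}\|A+e^{{\rm i}\theta}B\|$ and establishes the claimed identity.

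For the ``in particular'' inequality, the triangle inequality gives $\|A+e^{{\rm i}\theta}B\|\le\|A\|+\|B\|$ for every $\theta$, and taking the supremum completes the argument. The only conceptually nontrivial input is the standard formula $\omega(T)=\sup_\theta\|\mathrm{Re}(e^{{\rm i}\theta}T)\|$, which I would cite from standard references; everything else is direct matrix computation, so I do not foresee a real obstacle.
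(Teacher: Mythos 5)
Your argument is correct. The paper itself gives no proof of this lemma --- it is quoted directly from \cite[(4.6)]{8} --- but your derivation (the identity $\omega(T)=\sup_{\theta}\|\Re(e^{{\rm i}\theta}T)\|$, the computation of $\Re(e^{{\rm i}\theta}T)$ for the off-diagonal block operator, and the fact that $\left\|\left[\begin{smallmatrix}O&X\\X^*&O\end{smallmatrix}\right]\right\|=\|X\|$) is exactly the standard proof found in the cited source, and every step, including the reparametrization $\theta\mapsto-2\theta$ in the supremum and the triangle-inequality deduction of the second assertion, checks out.
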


\begin{lemma}
(mixed Schwarz inequality \cite[pp. 75--76]{9}) Let $T\in \mathbb B(\mathbb{H})$. Then for any $x,y \in \mathbb H$,
\[\left| \left\langle Tx,y \right\rangle  \right|\le \sqrt{\left\langle \left| T \right|x,x \right\rangle \left\langle \left| {{T}^{*}} \right|y,y \right\rangle }.\]
\end{lemma}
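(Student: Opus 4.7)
The plan is to prove the mixed Schwarz inequality by combining the polar decomposition of $T$ with the ordinary Cauchy--Schwarz inequality in $\mathbb{H}$. The guiding intuition is that the bilinear quantity $\langle Tx,y\rangle$ should be recast, using the polar factor of $T$, as a single inner product between two vectors each built from $|T|^{1/2}$ acting on $x$ or on a suitably transformed $y$; once in that form, Cauchy--Schwarz converts the squared modulus into a product of two quadratic forms, each of which we will identify with one of the two factors appearing on the right-hand side.

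First I would fix the polar decomposition $T=U|T|$, where $|T|=(T^*T)^{1/2}$ and $U$ is the partial isometry with initial space $\overline{\mathrm{Ran}\,|T|}$ and final space $\overline{\mathrm{Ran}\,T}$. Using the bounded functional calculus for the positive operator $|T|$, I factor $|T|=|T|^{1/2}\cdot|T|^{1/2}$ and rewrite
\[
\langle Tx,y\rangle=\langle U|T|^{1/2}|T|^{1/2}x,\,y\rangle=\langle |T|^{1/2}x,\,|T|^{1/2}U^{*}y\rangle,
\]
moving one factor of $|T|^{1/2}$ across the inner product using its self-adjointness. Applying the classical Cauchy--Schwarz inequality to this single inner product produces
\[
|\langle Tx,y\rangle|^{2}\le\langle |T|^{1/2}x,|T|^{1/2}x\rangle\,\langle |T|^{1/2}U^{*}y,|T|^{1/2}U^{*}y\rangle=\langle |T|x,x\rangle\,\langle U|T|U^{*}y,y\rangle,
\]
which already has the desired form on the right-hand side, provided we can identify $U|T|U^{*}$ with $|T^{*}|$.

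The key step, and the one requiring the most care, is therefore the identification $U|T|U^{*}=|T^{*}|$. To establish this I would first observe that $U^{*}U$ is the orthogonal projection onto $\overline{\mathrm{Ran}\,|T|}$, so that $U^{*}U\,|T|=|T|$. Squaring then gives
\[
\bigl(U|T|U^{*}\bigr)^{2}=U|T|\,(U^{*}U)\,|T|U^{*}=U|T|^{2}U^{*}=U(T^{*}T)U^{*}=(U|T|)(|T|U^{*})=TT^{*}.
\]
Since $U|T|U^{*}$ is a positive operator whose square equals $TT^{*}=|T^{*}|^{2}$, the uniqueness of the positive square root forces $U|T|U^{*}=(TT^{*})^{1/2}=|T^{*}|$. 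Substituting this identity into the previous display and taking square roots yields the claimed inequality.

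The main obstacle is thus not computational but structural: one must be careful with the initial and final spaces of the partial isometry $U$, since the identity $U^{*}U\,|T|=|T|$ that underlies the whole argument depends on the fact that $|T|$ maps into $\overline{\mathrm{Ran}\,|T|}$, which is precisely the initial space of $U$. Any proof that blurs this distinction risks treating $U$ as a unitary, which it is not in general. Once this subtlety is handled, the remainder of the argument is a clean one-line application of Cauchy--Schwarz.
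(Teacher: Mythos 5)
Your proof is correct: the paper itself offers no argument for this lemma (it is imported verbatim from Halmos, the cited reference), and your polar-decomposition argument is precisely the standard proof given there. The two points that need care — that $U^{*}U$ is the projection onto $\overline{\mathrm{Ran}\,|T|}$ so that $U^{*}U|T|=|T|$, and that $U|T|U^{*}$ is positive with square $TT^{*}$, whence $U|T|U^{*}=|T^{*}|$ by uniqueness of the positive square root — are both handled correctly, so nothing is missing.
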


We also have the following simple observation. For any $A,B\in \mathbb B\left( \mathbb H \right)$,
\begin{equation}\label{3}
{{\omega }_{e}}\left( A,B \right)\le \sqrt{\omega \left( A \right)\left\| A \right\|+\omega \left( B \right)\left\| B \right\|},
\end{equation}
holds. Indeed, 
\[\begin{aligned}
   \sqrt{{{\left| \left\langle Ax,x \right\rangle  \right|}^{2}}+{{\left| \left\langle Bx,x \right\rangle  \right|}^{2}}}&\le \sqrt{{{\omega }^{2}}\left( A \right)+{{\omega }^{2}}\left( B \right)} \\ 
 & \le \sqrt{\omega \left( A \right)\left\| A \right\|+\omega \left( B \right)\left\| B \right\|}.  
\end{aligned}\]
Now by taking supremum over $x\in \mathbb H$ with $\left\| x \right\|=1$ we get \eqref{3}. 

From \cite[Corollary 1]{1}, we know that if ${{X}^{2}}=O$, then $\omega \left( X \right)= \frac{1}{2}\left\| X \right\|$. So, if ${{A}^{2}}={{B}^{2}}=O$, then from \eqref{3}, we infer that
\[{{\omega }_{e}}\left( A,B \right)\le \sqrt{\frac{{{\left\| A \right\|}^{2}}+{{\left\| B \right\|}^{2}}}{2}}.\]

\section{Main Results}
In this section, we present our main results. For organizational purposes, we present our results in four subsections. In the first subsection, we prove the generalized form of the Cauchy-Schwarz inequality with some simple consequences; then, we discuss possible relations for $\omega_e(\cdot,\cdot)$. After that the quantity $\|(\cdot,\cdot)\|_e$ is discussed, and applications towards $\omega(\cdot)$ are presented in the end. Many numerical examples will be given to show the result's significance compared to the existing literature.
 \subsection{The generalized Cauchy-Schwarz inequality}
 Now, we show the following generalization of the Cauchy-Schwarz inequality.
\begin{lemma}\label{1}
Let $x,y,z\in \mathbb H$. Then 
\begin{equation}\label{5}
{{\left| \left\langle x,y \right\rangle  \right|}^{2}}+{{\left| \left\langle x,z \right\rangle  \right|}^{2}}\le \left\| x \right\|\sqrt{{{\left| \left\langle x,y \right\rangle  \right|}^{2}}{{\left\| y \right\|}^{2}}+{{\left| \left\langle x,z \right\rangle  \right|}^{2}}{{\left\| z \right\|}^{2}}+2\left| \left\langle x,y \right\rangle  \right|\left| \left\langle x,z \right\rangle  \right|\left| \left\langle y,z \right\rangle  \right|}.
\end{equation}
In particular,
\begin{equation}\label{4}
{{\left| \left\langle x,e \right\rangle  \right|}^{2}}+{{\left| \left\langle y,e \right\rangle  \right|}^{2}}\le \sqrt{{{\left| \left\langle x,e \right\rangle  \right|}^{2}}{{\left\| x \right\|}^{2}}+{{\left| \left\langle y,e \right\rangle  \right|}^{2}}{{\left\| y \right\|}^{2}}+2\left| \left\langle x,e \right\rangle  \right|\left| \left\langle y,e \right\rangle  \right|\left| \left\langle x,y \right\rangle  \right|},
\end{equation}
provided that $e\in \mathbb H$ with $\left\| e \right\|=1$. 
\end{lemma}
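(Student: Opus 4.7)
The plan is to reduce \eqref{5} to the classical Cauchy--Schwarz inequality applied to a cleverly chosen linear combination of $y$ and $z$, and then to derive \eqref{4} as an immediate specialization.

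First, I would set $\alpha=\langle x,y\rangle$ and $\beta=\langle x,z\rangle$ (assuming both are nonzero; the degenerate cases are trivial) and consider the auxiliary vector $w=\alpha y+\beta z$. The motivation for this particular choice is the identity
\[
\langle x,w\rangle=\overline{\alpha}\,\langle x,y\rangle+\overline{\beta}\,\langle x,z\rangle=|\alpha|^{2}+|\beta|^{2},
\]
which is exactly the left-hand side of \eqref{5}. Applying the usual Cauchy--Schwarz inequality to $\langle x,w\rangle$ then yields
\[
\bigl(|\alpha|^{2}+|\beta|^{2}\bigr)^{2}\le\|x\|^{2}\,\|\alpha y+\beta z\|^{2}.
\]

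Next, I would expand $\|\alpha y+\beta z\|^{2}=|\alpha|^{2}\|y\|^{2}+|\beta|^{2}\|z\|^{2}+2\Re\bigl(\alpha\overline{\beta}\langle y,z\rangle\bigr)$ and bound the cross term by $|\Re(\alpha\overline{\beta}\langle y,z\rangle)|\le|\alpha||\beta||\langle y,z\rangle|$. Substituting this back and taking square roots delivers
\[
|\alpha|^{2}+|\beta|^{2}\le\|x\|\sqrt{|\alpha|^{2}\|y\|^{2}+|\beta|^{2}\|z\|^{2}+2|\alpha||\beta||\langle y,z\rangle|},
\]
which is precisely \eqref{5}. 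Finally, \eqref{4} follows by specializing $x\mapsto e$ (with $\|e\|=1$), $y\mapsto x$, and $z\mapsto y$, and using the symmetry $|\langle e,u\rangle|=|\langle u,e\rangle|$.

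There is no real obstacle here: the proof is essentially a one-line application of Cauchy--Schwarz once the correct test vector $w=\alpha y+\beta z$ is identified. The only mild subtlety is recognizing that this specific choice of coefficients simultaneously collapses $\langle x,w\rangle$ to the desired quantity $|\alpha|^{2}+|\beta|^{2}$ and produces, upon expanding $\|w\|^{2}$, exactly the three terms appearing under the square root in \eqref{5}.
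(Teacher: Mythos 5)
Your proof is correct and follows essentially the same route as the paper: both apply the Cauchy--Schwarz inequality to $x$ and the test vector $w=\langle x,y\rangle y+\langle x,z\rangle z$, expand $\|w\|^{2}$, and bound the cross term via $\Re a\le |a|$, then obtain \eqref{4} by the same substitution. No gaps.
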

\begin{proof}
We have
\begin{equation}\label{25}
\begin{aligned}
  & {{\left( {{\left| \left\langle x,y \right\rangle  \right|}^{2}}+{{\left| \left\langle x,z \right\rangle  \right|}^{2}} \right)}^{2}} \\ 
 & ={{\left( \left\langle x,y \right\rangle \left\langle y,x \right\rangle +\left\langle x,z \right\rangle \left\langle z,x \right\rangle  \right)}^{2}} \\ 
 & ={{\left\langle x,\left\langle x,y \right\rangle y+\left\langle x,z \right\rangle z \right\rangle }^{2}} \\ 
 & \le {{\left\| x \right\|}^{2}}{{\left\| \left\langle x,y \right\rangle y+\left\langle x,z \right\rangle z \right\|}^{2}} \\ 
 &\quad \text{(by the Cauchy-Schwarz inequality)}\\
 & ={{\left\| x \right\|}^{2}}\left( {{\left| \left\langle x,y \right\rangle  \right|}^{2}}{{\left\| y \right\|}^{2}}+{{\left| \left\langle x,z \right\rangle  \right|}^{2}}{{\left\| z \right\|}^{2}}+2\Re\left( \left\langle x,y \right\rangle \overline{\left\langle x,z \right\rangle }\left\langle y,z \right\rangle  \right) \right) \\ 
 & \le {{\left\| x \right\|}^{2}}\left({{\left| \left\langle x,y \right\rangle  \right|}^{2}}{{\left\| y \right\|}^{2}}+{{\left| \left\langle x,z \right\rangle  \right|}^{2}}{{\left\| z \right\|}^{2}}+2\left| \left\langle x,y \right\rangle  \right|\left| \left\langle x,z \right\rangle  \right|\left| \left\langle y,z \right\rangle  \right|\right),\\
\end{aligned}
\end{equation}
where the we have used the fact that  $\Re a\le \left| a \right|$ for any $a\in \mathbb{C}$
to obtain the last inequality. This proves the first desired inequality.
The second inequality is observed from the first inequality by replacing $x$ with $e$ and substituting $y$ and $z$ with $x$  and $y$, respectively.
\end{proof}
In the following remark, we see how Lemma \ref{1} generalizes the Cauchy-Schwarz inequality.
\begin{remark}
\hfill 
\begin{itemize}
\item[(i)] If $y=z$, then
\[\left| \left\langle x,y \right\rangle  \right|\le \left\| x \right\|\left\| y \right\|.\]
\item[(ii)] If $y\bot z$, then
\[{{\left| \left\langle x,y \right\rangle  \right|}^{2}}+{{\left| \left\langle x,z \right\rangle  \right|}^{2}}\le \left\| x \right\|\sqrt{{{\left| \left\langle x,y \right\rangle  \right|}^{2}}{{\left\| y \right\|}^{2}}+{{\left| \left\langle x,z \right\rangle  \right|}^{2}}{{\left\| z \right\|}^{2}}}.\]
\end{itemize}

\end{remark}

\begin{remark}
It observes from the first inequality in \eqref{25} that
\begin{equation}\label{26}
{{\left| \left\langle x,y \right\rangle  \right|}^{2}}+{{\left| \left\langle x,z \right\rangle  \right|}^{2}}\le \left\| x \right\|\left\| \left\langle x,y \right\rangle y+\left\langle x,z \right\rangle z \right\|.
\end{equation}
In particular,
\begin{equation}\label{27}
{{\left| \left\langle x,e \right\rangle  \right|}^{2}}+{{\left| \left\langle y,e \right\rangle  \right|}^{2}}\le \left\| \left\langle e,x \right\rangle x+\left\langle e,y \right\rangle y \right\|.
\end{equation}
\end{remark}

For any $0\ne a,b\in \mathbb H$, one can define the angle between $a,b$ by the formula
\begin{equation}\label{6}
\cos {{\Psi }_{ab}}=\frac{\left| \left\langle a,b \right\rangle  \right|}{\left\| a \right\|\left\| b \right\|},\; 0\leq \Psi_{ab}\leq\frac{\pi}{2}.
\end{equation}
We refer the reader to \cite{Krein_FAA_1969, Lin_Intelligencer_2012, Sababheh_OaM_2022, Scharnhorst_ActaApplMath_2001} for discussion of this definition and another definition for the angle. In the following result, we prove the following additional property.
\begin{corollary}
Let $x,y,z\in \mathbb H$ be nonzero. Then 
\[\cos {{\Psi }_{yx}}\;\cos {{\Psi }_{xz}}\le \frac{1}{2}\sqrt{\cos^{2} \Psi _{yx}+\cos^{2} \Psi _{xz}+2\cos {{\Psi }_{yx}}\;\cos {{\Psi }_{xz}}\;\cos {{\Psi }_{zy}}}.\]
\end{corollary}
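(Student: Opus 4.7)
The plan is to derive the corollary directly from inequality \eqref{5} of Lemma \ref{1} by a homogeneity (normalization) argument, combined with the elementary fact $a^{2}+b^{2}\ge 2ab$.

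First I would apply Lemma \ref{1} with $y$ and $z$ replaced by their normalized counterparts $y/\|y\|$ and $z/\|z\|$ (which is legitimate since $x,y,z$ are nonzero). Since the normalized vectors have unit norm, the factors $\|y\|^{2}$ and $\|z\|^{2}$ appearing inside the square root in \eqref{5} simply become $1$. Invoking the definition \eqref{6} of the angle, one has
\[
\left|\left\langle x,\tfrac{y}{\|y\|}\right\rangle\right|=\|x\|\cos\Psi_{xy},\qquad \left|\left\langle x,\tfrac{z}{\|z\|}\right\rangle\right|=\|x\|\cos\Psi_{xz},\qquad \left|\left\langle \tfrac{y}{\|y\|},\tfrac{z}{\|z\|}\right\rangle\right|=\cos\Psi_{yz}.
\]
Substituting these into \eqref{5} and dividing both sides by $\|x\|^{2}$ (using that the resulting right-hand side is $\|x\|\cdot\|x\|\sqrt{\cdots}$) produces the homogeneous inequality
\[
\cos^{2}\Psi_{yx}+\cos^{2}\Psi_{xz}\le\sqrt{\cos^{2}\Psi_{yx}+\cos^{2}\Psi_{xz}+2\cos\Psi_{yx}\cos\Psi_{xz}\cos\Psi_{zy}},
\]
where the symmetry $\Psi_{ab}=\Psi_{ba}$ of the angle has been used.

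To finish, I would apply the elementary bound $\cos^{2}\Psi_{yx}+\cos^{2}\Psi_{xz}\ge 2\cos\Psi_{yx}\cos\Psi_{xz}$, which holds by AM--GM since both cosines are nonnegative by \eqref{6}. Chaining this with the previous display and dividing by $2$ yields exactly the claimed inequality.

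No substantial obstacle is expected here: the argument is essentially a change of variables that trivializes the norm factors in \eqref{5}, followed by one application of AM--GM. The only point to be careful about is to perform the normalization before invoking \eqref{5} rather than after, so that the $\|y\|$ and $\|z\|$ weights inside the square root disappear cleanly and the resulting expression matches the statement of the corollary verbatim.
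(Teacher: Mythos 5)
Your proposal is correct and follows essentially the same route as the paper: both rest on inequality \eqref{5}, normalization of the vectors, and the AM--GM bound $2ab\le a^{2}+b^{2}$; you merely normalize first and apply AM--GM last, whereas the paper applies AM--GM to the left side of \eqref{5} first and then normalizes. The order is immaterial, and your intermediate inequality $\cos^{2}\Psi_{yx}+\cos^{2}\Psi_{xz}\le\sqrt{\cdots}$ is in fact a slightly sharper statement from which the corollary follows at once.
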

\begin{proof}
If we utilize the arithmetic-geometric mean inequality in the left-side of inequality \eqref{5}, we conclude that
\[2\left| \left\langle x,y \right\rangle  \right|\left| \left\langle x,z \right\rangle  \right|\le \left\| x \right\|\sqrt{{{\left| \left\langle x,y \right\rangle  \right|}^{2}}{{\left\| y \right\|}^{2}}+{{\left| \left\langle x,z \right\rangle  \right|}^{2}}{{\left\| z \right\|}^{2}}+2\left| \left\langle x,y \right\rangle  \right|\left| \left\langle x,z \right\rangle  \right|\left| \left\langle y,z \right\rangle  \right|}.\]
Replacing $x,y,z$ by  $\frac{x}{\left\| x \right\|},\frac{y}{\left\| y \right\|},\frac{z}{\left\| z \right\|}$, we have
\[2\frac{\left| \left\langle x,y \right\rangle  \right|\left| \left\langle x,z \right\rangle  \right|}{{{\left\| x \right\|}^{2}}\left\| y \right\|\left\| z \right\|}\le \sqrt{\frac{{{\left| \left\langle x,y \right\rangle  \right|}^{2}}}{{{\left\| x \right\|}^{2}}{{\left\| y \right\|}^{2}}}+\frac{{{\left| \left\langle x,z \right\rangle  \right|}^{2}}}{{{\left\| x \right\|}^{2}}{{\left\| z \right\|}^{2}}}+2\frac{\left| \left\langle x,y \right\rangle  \right|\left| \left\langle x,z \right\rangle  \right|\left| \left\langle y,z \right\rangle  \right|}{{{\left\| x \right\|}^{2}}{{\left\| y \right\|}^{2}}{{\left\| z \right\|}^{2}}}},\]
which is equivalent to the desired result, thanks to \eqref{6}.
\end{proof}

\begin{remark}
Letting $x=Tx$, $y={{T}^{*}}x$, and $e=x$ with $\left\| x \right\|=1$, in \eqref{4}, we reach the following well-known inequality (see \cite[Ineq. (3.15)]{2})
\[\left| \left\langle Tx,x \right\rangle  \right|\le \frac{1}{2}\sqrt{\left\langle \left( {{\left| T \right|}^{2}}+{{\left| {{T}^{*}} \right|}^{2}} \right)x,x \right\rangle +2\left| \left\langle {{T}^{2}}x,x \right\rangle  \right|}.\]
In particular, 
\begin{equation}\label{29}
\omega^2(T)\leq\frac{1}{4}\left\| |T|^2+|T^*|^2\right\|+\frac{1}{2}\omega(T^2).
\end{equation}
Indeed,
\[\begin{aligned}
  & {{\left| \left\langle Tx,x \right\rangle  \right|}^{2}} \\ 
 & \le \frac{1}{2}\sqrt{{{\left| \left\langle Tx,x \right\rangle  \right|}^{2}}\left( {{\left\| Tx \right\|}^{2}}+{{\left\| {{T}^{*}}x \right\|}^{2}} \right)+2{{\left| \left\langle Tx,x \right\rangle  \right|}^{2}}\left| \left\langle {{T}^{2}}x,x \right\rangle  \right|} \\ 
 & =\frac{1}{2}\left| \left\langle Tx,x \right\rangle  \right|\sqrt{\left\langle \left( {{\left| T \right|}^{2}}+{{\left| {{T}^{*}} \right|}^{2}} \right)x,x \right\rangle +2\left| \left\langle {{T}^{2}}x,x \right\rangle  \right|}. 
\end{aligned}\]
\end{remark}

\subsection{Upper bounds for $\omega_e(\cdot,\cdot)$}
In this subsection, we present some bounds for the Euclidean operator radius. The applications of these bounds towards the numerical radius will be given in Subsection \ref{section_w}. Although simpler bounds are known in the literature in terms of $\|\cdot\|$, the following bound involves the smaller quantity $\omega(\cdot).$
\begin{theorem}\label{2}
Let $A,B\in \mathbb B\left( \mathbb H \right)$. Then
\[\omega _{e}^{2}\left( A,B \right)\le \sqrt{{{\omega }^{2}}\left( A \right){{\left\| A \right\|}^{2}}+{{\omega }^{2}}\left( B \right){{\left\| B \right\|}^{2}}+2\omega \left( A \right)\omega \left( B \right)\omega \left( {{B}^{*}}A \right)}.\]
\end{theorem}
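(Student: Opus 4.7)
The plan is to derive Theorem \ref{2} directly from the specialization \eqref{4} of the generalized Cauchy--Schwarz inequality, applied pointwise to unit vectors and then converted to an operator bound by taking the supremum.

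First I would fix a unit vector $x \in \mathbb{H}$ and apply \eqref{4} with the substitutions $x \mapsto Ax$, $y \mapsto Bx$, and $e \mapsto x$. Since $\|e\|=\|x\|=1$, the inner products $\langle Ax, e\rangle$ and $\langle Bx, e\rangle$ become $\langle Ax, x\rangle$ and $\langle Bx, x\rangle$, while the cross term $\langle x,y\rangle$ in \eqref{4} becomes $\langle Ax, Bx\rangle$. This produces the pointwise estimate
\[
|\langle Ax,x\rangle|^2 + |\langle Bx,x\rangle|^2 \le \sqrt{|\langle Ax,x\rangle|^2 \|Ax\|^2 + |\langle Bx,x\rangle|^2 \|Bx\|^2 + 2|\langle Ax,x\rangle||\langle Bx,x\rangle||\langle Ax,Bx\rangle|}.
\]

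Next I would bound each factor under the square root by its appropriate global quantity, using only trivial facts: $|\langle Ax,x\rangle| \le \omega(A)$ and $\|Ax\| \le \|A\|$, and similarly for $B$, together with the identity $\langle Ax, Bx\rangle = \langle B^*Ax, x\rangle$ which gives $|\langle Ax, Bx\rangle| \le \omega(B^*A)$. Since each nonnegative quantity under the radical is dominated by a constant independent of $x$, and $\sqrt{\cdot}$ is monotone, the right-hand side is majorized by
\[
\sqrt{\omega^2(A)\|A\|^2 + \omega^2(B)\|B\|^2 + 2\omega(A)\omega(B)\omega(B^*A)}.
\]

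Finally I would take the supremum over all unit vectors $x \in \mathbb{H}$. The left-hand side is precisely $\omega_e^2(A,B)$ by definition of the Euclidean operator radius, while the right-hand side is already independent of $x$, yielding the claimed inequality. No real obstacle arises here: the only subtle point to note is that we must bound the quantities inside the square root before taking the supremum (rather than attempting to move $\sup$ past the sum of squares), and for this the concavity/monotonicity of $\sqrt{\cdot}$ together with the uniform bounds $\omega(A), \omega(B), \|A\|, \|B\|, \omega(B^*A)$ is exactly what is needed.
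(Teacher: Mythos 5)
Your proposal is correct and follows essentially the same route as the paper's own proof: substitute $Ax$, $Bx$, $x$ into \eqref{4}, identify $\langle Ax,Bx\rangle=\langle B^*Ax,x\rangle$, bound each term under the radical by $\omega(A)$, $\omega(B)$, $\|A\|$, $\|B\|$, $\omega(B^*A)$, and take the supremum over unit vectors. No gaps.
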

\begin{proof}
By substituting $x=Ax$, $y=Bx$, and $e=x$,  in \eqref{4}, we obtain
\[\begin{aligned}
  & {{\left| \left\langle Ax,x \right\rangle  \right|}^{2}}+{{\left| \left\langle Bx,x \right\rangle  \right|}^{2}} \\ 
 & \le \sqrt{{{\left| \left\langle Ax,x \right\rangle  \right|}^{2}}{{\left\| Ax \right\|}^{2}}+{{\left| \left\langle Bx,x \right\rangle  \right|}^{2}}{{\left\| Bx \right\|}^{2}}+2\left| \left\langle Ax,x \right\rangle  \right|\left| \left\langle Bx,x \right\rangle  \right|\left| \left\langle {{B}^{*}}Ax,x \right\rangle  \right|} \\ 
 & \le \sqrt{{{\omega }^{2}}\left( A \right){{\left\| A \right\|}^{2}}+{{\omega }^{2}}\left( B \right){{\left\| B \right\|}^{2}}+2\omega \left( A \right)\omega \left( B \right)\omega \left( {{B}^{*}}A \right)}, 
\end{aligned}\]
i.e.,
\begin{equation*}\label{10}
{{\left| \left\langle Ax,x \right\rangle  \right|}^{2}}+{{\left| \left\langle Bx,x \right\rangle  \right|}^{2}}\le \sqrt{{{\omega }^{2}}\left( A \right){{\left\| A \right\|}^{2}}+{{\omega }^{2}}\left( B \right){{\left\| B \right\|}^{2}}+2\omega \left( A \right)\omega \left( B \right)\omega \left( {{B}^{*}}A \right)}.
\end{equation*}
We obtain the desired result by taking supremum over all unit vectors $x\in \mathbb H$.
\end{proof}

\begin{remark}\label{rem_main_pop}
It has been shown in \cite{11} that
\begin{equation}\label{eq_rem_pop}
\omega_e^2(A,B)\leq \left\| |A|^2+|B|^2\right\|.
\end{equation}
This remark shows that Theorem \ref{2} can be better than this latter bound. Indeed, if we let  $A=\left[
\begin{array}{cc}
 2 & 3 \\
 1 & 0 \\
\end{array}
\right]$ and $B=\left[
\begin{array}{cc}
 2 & 2 \\
 5 & 3 \\
\end{array}
\right],$ then numerical calculations show that
\[\sqrt{{{\omega }^{2}}\left( A \right){{\left\| A \right\|}^{2}}+{{\omega }^{2}}\left( B \right){{\left\| B \right\|}^{2}}+2\omega \left( A \right)\omega \left( B \right)\omega \left( {{B}^{*}}A \right)}\approx 47.0005,\]
and
\[\left\| |A|^2+|B|^2\right\|\approx 53.7099.\]
This shows that, in this example, the bound we found in Theorem \ref{2} is better than that in \eqref{eq_rem_pop}.

However, if we take $A=\left[
\begin{array}{cc}
 4 & 0 \\
 1 & 3 \\
\end{array}
\right]$ and $B=\left[
\begin{array}{cc}
 1 & 3 \\
 0 & 5 \\
\end{array}
\right],$ we find that
\[\sqrt{{{\omega }^{2}}\left( A \right){{\left\| A \right\|}^{2}}+{{\omega }^{2}}\left( B \right){{\left\| B \right\|}^{2}}+2\omega \left( A \right)\omega \left( B \right)\omega \left( {{B}^{*}}A \right)}\approx 47.5757,\]
and
\[\left\| |A|^2+|B|^2\right\|\approx 44.3654.\]
Thus, neither Theorem \ref{2} nor \eqref{eq_rem_pop} is uniformly better than the other.
\end{remark}

%==================================================================================
Another application of \eqref{27} is the following tighter bound than the one given in Theorem \ref{2}. Since we have emphasized the significance of Theorem \ref{2} in Remark \ref{rem_main_pop}, the significance of Theorem \ref{28} is evident.
\begin{theorem}\label{28}
Let $A,B\in \mathbb B\left( \mathbb H \right)$. Then
	\[\omega _{e}^{2}\left( A,B \right)\le \sqrt{\left\| {{\omega }^{2}}\left( A \right){{\left| {{A}^{*}} \right|}^{2}}+{{\omega }^{2}}\left( B \right){{\left| {{B}^{*}} \right|}^{2}} \right\|+2\omega \left( A \right)\omega \left( B \right)\omega \left( A{{B}^{*}} \right)}.\]
\end{theorem}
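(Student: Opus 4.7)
The plan is to apply the generalized Cauchy--Schwarz inequality in its sharper form \eqref{27} (which bypasses the intermediate Cauchy--Schwarz bound \eqref{4} used to establish Theorem \ref{2}). Specifically, for each unit vector $x\in\mathbb H$, I substitute $x\leftarrow A^{*}x$, $y\leftarrow B^{*}x$, and $e\leftarrow x$ into \eqref{27}. The choice of $A^{*}$ and $B^{*}$ rather than $A$ and $B$ is deliberate: expanding the resulting norm produces $\|A^{*}x\|^{2}=\langle|A^{*}|^{2}x,x\rangle$ and $\|B^{*}x\|^{2}=\langle|B^{*}|^{2}x,x\rangle$, precisely matching the factors $|A^{*}|^{2}$ and $|B^{*}|^{2}$ appearing in the statement.

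After this substitution, noting that $|\langle A^{*}x,x\rangle|=|\langle Ax,x\rangle|$, I square both sides and expand the right-hand side via the identity $\|\alpha u+\beta v\|^{2}=|\alpha|^{2}\|u\|^{2}+|\beta|^{2}\|v\|^{2}+2\Re(\alpha\overline{\beta}\langle u,v\rangle)$, bounding the real part by its modulus. The mixed term uses $\langle A^{*}x,B^{*}x\rangle=\langle BA^{*}x,x\rangle$, so its modulus will later be controlled by $\omega(BA^{*})=\omega(AB^{*})$ via the adjoint invariance of the numerical radius. This produces, for every unit vector $x$,
\[\bigl(|\langle Ax,x\rangle|^{2}+|\langle Bx,x\rangle|^{2}\bigr)^{2}\le|\langle Ax,x\rangle|^{2}\langle|A^{*}|^{2}x,x\rangle+|\langle Bx,x\rangle|^{2}\langle|B^{*}|^{2}x,x\rangle+2|\langle Ax,x\rangle|\,|\langle Bx,x\rangle|\,|\langle BA^{*}x,x\rangle|.\]

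Next, I bound $|\langle Ax,x\rangle|\le\omega(A)$, $|\langle Bx,x\rangle|\le\omega(B)$, and $|\langle BA^{*}x,x\rangle|\le\omega(AB^{*})$. Crucially, for the first two summands I keep the operator-valued inner products intact and bundle
\[\omega^{2}(A)\langle|A^{*}|^{2}x,x\rangle+\omega^{2}(B)\langle|B^{*}|^{2}x,x\rangle=\bigl\langle\bigl(\omega^{2}(A)|A^{*}|^{2}+\omega^{2}(B)|B^{*}|^{2}\bigr)x,x\bigr\rangle\le\bigl\|\omega^{2}(A)|A^{*}|^{2}+\omega^{2}(B)|B^{*}|^{2}\bigr\|,\]
exploiting positivity of the bundled operator. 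Taking the square root and then the supremum over $\|x\|=1$ turns the left-hand side into $\omega_{e}^{2}(A,B)$ and yields the desired bound. The main subtlety is exactly this bundling step: separately estimating $\langle|A^{*}|^{2}x,x\rangle\le\|A\|^{2}$ would dissolve the joint operator-norm structure and only reproduce the weaker bound of Theorem \ref{2}, so the argument has to be carried out so that the sum of the two positive operators is kept inside a single operator-norm at the end.
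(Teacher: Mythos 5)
Your proof is correct and leads to exactly the stated bound, but it threads through Lemma \ref{1} differently than the paper does. The paper substitutes $x\to Ax$, $y\to Bx$, $e\to x$ in \eqref{27}, which yields $\bigl\|\bigl(\langle A^{*}x,x\rangle A+\langle B^{*}x,x\rangle B\bigr)x\bigr\|$; to make the $|A^{*}|^{2}$, $|B^{*}|^{2}$ structure appear it must then relax this to the full operator norm, invoke the $C^{*}$-identity $\|XX^{*}\|=\|X\|^{2}$, apply the operator-norm triangle inequality, and use $\|\Re X\|\le\omega(X)$. Your substitution $x\to A^{*}x$, $y\to B^{*}x$ produces the quadratic forms $\langle|A^{*}|^{2}x,x\rangle$, $\langle|B^{*}|^{2}x,x\rangle$ immediately upon expanding the squared vector norm $\|\langle Ax,x\rangle A^{*}x+\langle Bx,x\rangle B^{*}x\|^{2}$, so you stay at the level of scalars and quadratic forms until the single final step $\langle Sx,x\rangle\le\|S\|$ for the positive operator $S=\omega^{2}(A)|A^{*}|^{2}+\omega^{2}(B)|B^{*}|^{2}$; the only operator-theoretic inputs you need are positivity and $\omega(BA^{*})=\omega(AB^{*})$. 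Your intermediate pointwise estimate is in fact the quadratic form at $x$ of the very operator whose norm the paper bounds, hence never larger than the paper's intermediate quantity, and both routes terminate at the identical inequality after taking the supremum over unit vectors. In short: same key lemma and same bundling idea, but your execution is the more economical one, dispensing with the $C^{*}$-identity and the $\|\Re X\|\le\omega(X)$ step entirely.
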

\begin{proof}
Let $x\in\mathbb{H}$ be a unit vector. If we replace $x$ by $Ax$, $y$ by $Bx$ and $e$ by $x$  in \eqref{27}, we get
	\[\begin{aligned}
  & {{\left| \left\langle Ax,x \right\rangle  \right|}^{2}}+{{\left| \left\langle Bx,x \right\rangle  \right|}^{2}} \\ 
 & \le \left\| \left( \left\langle {{A}^{*}}x,x \right\rangle A+\left\langle {{B}^{*}}x,x \right\rangle B \right)x \right\| \\ 
 & \le \left\| \left\langle {{A}^{*}}x,x \right\rangle A+\left\langle {{B}^{*}}x,x \right\rangle B \right\| \\ 
 & ={{\left\| \left( \left\langle {{A}^{*}}x,x \right\rangle A+\left\langle {{B}^{*}}x,x \right\rangle B \right)\left( \left\langle Ax,x \right\rangle {{A}^{*}}+\left\langle Bx,x \right\rangle {{B}^{*}} \right) \right\|}^{\frac{1}{2}}} \\ 
 &\quad \text{(since $\left\| X{{X}^{*}} \right\|={{\left\| X \right\|}^{2}}$ for any $X\in \mathbb B\left( \mathbb H \right)$)}\\
 & ={{\left\| {{\left| \left\langle Ax,x \right\rangle  \right|}^{2}}{{\left| {{A}^{*}} \right|}^{2}}+{{\left| \left\langle Bx,x \right\rangle  \right|}^{2}}{{\left| {{B}^{*}} \right|}^{2}}+2\Re\left( \left\langle {{A}^{*}}x,x \right\rangle \left\langle Bx,x \right\rangle A{{B}^{*}} \right) \right\|}^{\frac{1}{2}}} \\ 
 & \le \sqrt{\left\| {{\left| \left\langle Ax,x \right\rangle  \right|}^{2}}{{\left| {{A}^{*}} \right|}^{2}}+{{\left| \left\langle Bx,x \right\rangle  \right|}^{2}}{{\left| {{B}^{*}} \right|}^{2}} \right\|+2\left\| \Re\left( \left\langle {{A}^{*}}x,x \right\rangle \left\langle Bx,x \right\rangle A{{B}^{*}} \right) \right\|} \\ 
 &\quad \text{(by the triangle inequality for the usual operator norm)}\\
 & \le \sqrt{\left\| {{\left| \left\langle Ax,x \right\rangle  \right|}^{2}}{{\left| {{A}^{*}} \right|}^{2}}+{{\left| \left\langle Bx,x \right\rangle  \right|}^{2}}{{\left| {{B}^{*}} \right|}^{2}} \right\|+2\left| \left\langle {{A}^{*}}x,x \right\rangle \left\langle Bx,x \right\rangle  \right|\omega \left( A{{B}^{*}} \right)} \\ 
 &\quad \text{(since $\left\| \Re X \right\|\le \omega \left( X \right)$ for any $X\in \mathbb B\left( \mathbb H \right)$)}\\
 & \le \sqrt{\left\| {{\omega }^{2}}\left( A \right){{\left| {{A}^{*}} \right|}^{2}}+{{\omega }^{2}}\left( B \right){{\left| {{B}^{*}} \right|}^{2}} \right\|+2\omega \left( A \right)\omega \left( B \right)\omega \left( A{{B}^{*}} \right)}.  
\end{aligned}\]
Consequently,
	\[{{\left| \left\langle Ax,x \right\rangle  \right|}^{2}}+{{\left| \left\langle Bx,x \right\rangle  \right|}^{2}}\le \sqrt{\left\| {{\omega }^{2}}\left( A \right){{\left| {{A}^{*}} \right|}^{2}}+{{\omega }^{2}}\left( B \right){{\left| {{B}^{*}} \right|}^{2}} \right\|+2\omega \left( A \right)\omega \left( B \right)\omega \left( A{{B}^{*}} \right)},\]
which indicates the desired inequality after taking supremum over all unit vectors $x\in \mathbb H$.
\end{proof}

We remark here that Theorem \ref{28} recovers \eqref{29} by substituting $A=T$ and $B=T^{*}$.

\begin{remark}
Since Theorem \ref{28} is a refinement of Theorem \ref{2}, Remark \ref{rem_main_pop} already explains the advantage of Theorem \ref{28} over \eqref{eq_rem_pop}. Here, we give an example to show that \eqref{eq_rem_pop} can also be better than Theorem \ref{28}. Indeed, if we take 
$A=\left[
\begin{array}{cc}
 4 & 3 \\
 4 & 2 \\
\end{array}
\right]$ and $B=\left[
\begin{array}{cc}
 0 & 4 \\
 3 & 0\\
\end{array}
\right],$ we find that
\[\sqrt{\left\| {{\omega }^{2}}\left( A \right){{\left| {{A}^{*}} \right|}^{2}}+{{\omega }^{2}}\left( B \right){{\left| {{B}^{*}} \right|}^{2}} \right\|+2\omega \left( A \right)\omega \left( B \right)\omega \left( A{{B}^{*}} \right)}\approx 56.1224,\]
and
\[\left\| |A|^2+|B|^2\right\|\approx 55.8806.\]
Thus, Theorem \ref{28} and \eqref{eq_rem_pop} are generally not comparable.
\end{remark}
%==========================================================================================
Next, a new form involving the Cartesian decomposition as an upper bound of $\omega_e(\cdot,\cdot)$ is stated.
\begin{theorem}\label{7}
Let $A,B\in \mathbb B\left( \mathbb H \right)$. Then
\[\omega _{e}^{2}\left( A,B \right)\le \sqrt{\sqrt{\left( {{\omega }^{4}}\left( A \right)+{{\omega }^{4}}\left( B \right) \right)}\;\omega \left( {{\left| A \right|}^{2}}+{\rm i}{{\left| B \right|}^{2}} \right)+2\omega \left( A \right)\omega \left( B \right)\omega \left( {{B}^{*}}A \right)}.\]
\end{theorem}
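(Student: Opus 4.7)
The plan is to follow the template of Theorem \ref{2}: substitute $x \mapsto Ax$, $y \mapsto Bx$, $e \mapsto x$ (with $\|x\|=1$) into inequality \eqref{4} of Lemma \ref{1}. This immediately produces the key estimate
\[
|\langle Ax,x\rangle|^2 + |\langle Bx,x\rangle|^2
\le \sqrt{\,|\langle Ax,x\rangle|^2 \langle |A|^2 x,x\rangle + |\langle Bx,x\rangle|^2 \langle |B|^2 x,x\rangle + 2|\langle Ax,x\rangle||\langle Bx,x\rangle||\langle B^* Ax,x\rangle|\,},
\]
using $\|Ax\|^2=\langle|A|^2x,x\rangle$ and $\|Bx\|^2=\langle|B|^2x,x\rangle$. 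The cross term is handled as in Theorem \ref{2}, since $|\langle Ax,x\rangle||\langle Bx,x\rangle||\langle B^* Ax,x\rangle| \le \omega(A)\omega(B)\omega(B^* A)$.

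The essential new ingredient, and the step that produces the $\omega(|A|^2+\mathrm{i}|B|^2)$ term, is how I bound the first summand under the square root. I would view it as a two-dimensional inner product of the vectors $(|\langle Ax,x\rangle|^2,\,|\langle Bx,x\rangle|^2)$ and $(\langle|A|^2x,x\rangle,\,\langle|B|^2x,x\rangle)$, and apply the Cauchy-Schwarz inequality in $\mathbb{R}^2$:
\[
|\langle Ax,x\rangle|^2\langle|A|^2x,x\rangle + |\langle Bx,x\rangle|^2\langle|B|^2x,x\rangle
\le \sqrt{|\langle Ax,x\rangle|^4+|\langle Bx,x\rangle|^4}\;\sqrt{\langle|A|^2x,x\rangle^2+\langle|B|^2x,x\rangle^2}.
\]
The first factor is at most $\sqrt{\omega^4(A)+\omega^4(B)}$. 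For the second factor, since $|A|^2$ and $|B|^2$ are positive, $\langle|A|^2x,x\rangle$ and $\langle|B|^2x,x\rangle$ are real, so
\[
\sqrt{\langle|A|^2x,x\rangle^2+\langle|B|^2x,x\rangle^2}=\bigl|\langle(|A|^2+\mathrm{i}|B|^2)x,x\rangle\bigr|\le \omega(|A|^2+\mathrm{i}|B|^2).
\]
Combining these two bounds with the cross-term estimate and then taking the supremum over unit vectors $x\in\mathbb{H}$ yields exactly the claimed inequality.

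The only subtle point, and the one I would double-check, is the passage to $\omega(|A|^2+\mathrm{i}|B|^2)$: it relies crucially on positivity (hence reality) of $\langle|A|^2x,x\rangle$ and $\langle|B|^2x,x\rangle$, so that $\langle(|A|^2+\mathrm{i}|B|^2)x,x\rangle$ has these as its real and imaginary parts and its modulus equals the Euclidean norm of the pair. Once this identification is secured, everything else is a straightforward chain of scalar inequalities followed by taking the supremum to convert pointwise bounds into the operator quantities $\omega_e^2(A,B)$, $\omega(A)$, $\omega(B)$, $\omega(B^*A)$ and $\omega(|A|^2+\mathrm{i}|B|^2)$.
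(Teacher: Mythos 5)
Your proposal is correct and follows essentially the same route as the paper's proof: the same substitution into \eqref{4}, the same Cauchy--Schwarz step in $\mathbb{R}^2$ producing the factor $\sqrt{\omega^4(A)+\omega^4(B)}$, the same identification $\sqrt{\langle|A|^2x,x\rangle^2+\langle|B|^2x,x\rangle^2}=\left|\left\langle\left(|A|^2+{\rm i}|B|^2\right)x,x\right\rangle\right|\le\omega\left(|A|^2+{\rm i}|B|^2\right)$, and the same treatment of the cross term. The only difference is the trivial one of rewriting $\|Ax\|^2$ as $\langle|A|^2x,x\rangle$ before rather than after applying Cauchy--Schwarz.
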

\begin{proof}
By substituting $x=Ax$, $y=Bx$, and $e=x$,  in \eqref{4}, we obtain
\[\begin{aligned}
  & {{\left| \left\langle Ax,x \right\rangle  \right|}^{2}}+{{\left| \left\langle Bx,x \right\rangle  \right|}^{2}} \\ 
 & \le \sqrt{{{\left| \left\langle Ax,x \right\rangle  \right|}^{2}}{{\left\| Ax \right\|}^{2}}+{{\left| \left\langle Bx,x \right\rangle  \right|}^{2}}{{\left\| Bx \right\|}^{2}}+2\left| \left\langle Ax,x \right\rangle  \right|\left| \left\langle Bx,x \right\rangle  \right|\left| \left\langle {{B}^{*}}Ax,x \right\rangle  \right|} \\ 
 & \le \sqrt{\sqrt{\left( {{\left| \left\langle Ax,x \right\rangle  \right|}^{4}}+{{\left| \left\langle Bx,x \right\rangle  \right|}^{4}} \right)\left( {{\left\| Ax \right\|}^{4}}+{{\left\| Bx \right\|}^{4}} \right)}+2\left| \left\langle Ax,x \right\rangle  \right|\left| \left\langle Bx,x \right\rangle  \right|\left| \left\langle {{B}^{*}}Ax,x \right\rangle  \right|} \\ 
 &\quad \text{(by the Cauchy-Schwarz inequality)}\\
 & =\sqrt{\sqrt{\left( {{\left| \left\langle Ax,x \right\rangle  \right|}^{4}}+{{\left| \left\langle Bx,x \right\rangle  \right|}^{4}} \right)\left( {{\left\langle {{\left| A \right|}^{2}}x,x \right\rangle }^{2}}+{{\left\langle {{\left| B \right|}^{2}}x,x \right\rangle }^{2}} \right)}+2\left| \left\langle Ax,x \right\rangle  \right|\left| \left\langle Bx,x \right\rangle  \right|\left| \left\langle {{B}^{*}}Ax,x \right\rangle  \right|} \\ 
 & =\sqrt{\sqrt{ {{\left| \left\langle Ax,x \right\rangle  \right|}^{4}}+{{\left| \left\langle Bx,x \right\rangle  \right|}^{4}} }\;\left| \left\langle \left( {{\left| A \right|}^{2}}+{\rm i}{{\left| B \right|}^{2}} \right)x,x \right\rangle  \right|+2\left| \left\langle Ax,x \right\rangle  \right|\left| \left\langle Bx,x \right\rangle  \right|\left| \left\langle {{B}^{*}}Ax,x \right\rangle  \right|} \\ 
  &\quad \text{(since $\left| a+{\rm i}b \right|=\sqrt{{{a}^{2}}+{{b}^{2}}}$ for any $a,b\in \mathbb{R}$)}\\
 & \le \sqrt{\sqrt{{{\omega }^{4}}\left( A \right)+{{\omega }^{4}}\left( B \right)}\;\omega \left( {{\left| A \right|}^{2}}+{\rm i}{{\left| B \right|}^{2}} \right)+2\omega \left( A \right)\omega \left( B \right)\omega \left( {{B}^{*}}A \right)},  
\end{aligned}\]
i.e.,
\begin{equation}\label{12}
{{\left| \left\langle Ax,x \right\rangle  \right|}^{2}}+{{\left| \left\langle Bx,x \right\rangle  \right|}^{2}}\le \sqrt{\sqrt{{{\omega }^{4}}\left( A \right)+{{\omega }^{4}}\left( B \right)}\;\omega \left( {{\left| A \right|}^{2}}+{\rm i}{{\left| B \right|}^{2}} \right)+2\omega \left( A \right)\omega \left( B \right)\omega \left( {{B}^{*}}A \right)}.
\end{equation}
Now, we reach the desired result by taking supremum over all unit vectors $x\in \mathbb H$.
\end{proof}
%===================================================================================
\begin{remark}
In both Theorems \ref{28} and \ref{7}, we have found some upper bounds for $\omega_e(A, B).$ In this remark, we give examples showing that neither bound is uniformly better than the other. For this, let $A=\left[
\begin{array}{cc}
 -3.& 3 \\
 1 & -1 \\
\end{array}
\right]$ and $B=\left[
\begin{array}{cc}
 4 & -5\\
 3 & -5\\
\end{array}
\right].$ Then numerical calculations show that
\[ \sqrt{\sqrt{\left( {{\omega }^{4}}\left( A \right)+{{\omega }^{4}}\left( B \right) \right)}\;\omega \left( {{\left| A \right|}^{2}}+{\rm i}{{\left| B \right|}^{2}} \right)+2\omega \left( A \right)\omega \left( B \right)\omega \left( {{B}^{*}}A \right)}\approx 57.1627,\]
and 
\[\sqrt{\left\| {{\omega }^{2}}\left( A \right){{\left| {{A}^{*}} \right|}^{2}}+{{\omega }^{2}}\left( B \right){{\left| {{B}^{*}} \right|}^{2}} \right\|+2\omega \left( A \right)\omega \left( B \right)\omega \left( A{{B}^{*}} \right)}\approx 57.3063,\]
showing that the bound in Theorem \ref{7} is better than that in Theorem \ref{28} for this example.

On the other hand, if we let $A=\left[
\begin{array}{cc}
 0 & -4\\
 1 & 2 \\
\end{array}
\right]$ and $B=\left[
\begin{array}{cc}
 -3 & 3 \\
 2 & 4 \\
\end{array}
\right]$ we find that
\[ \sqrt{\sqrt{\left( {{\omega }^{4}}\left( A \right)+{{\omega }^{4}}\left( B \right) \right)}\;\omega \left( {{\left| A \right|}^{2}}+{\rm i}{{\left| B \right|}^{2}} \right)+2\omega \left( A \right)\omega \left( B \right)\omega \left( {{B}^{*}}A \right)}\approx 33.1982,\]
and 
\[\sqrt{\left\| {{\omega }^{2}}\left( A \right){{\left| {{A}^{*}} \right|}^{2}}+{{\omega }^{2}}\left( B \right){{\left| {{B}^{*}} \right|}^{2}} \right\|+2\omega \left( A \right)\omega \left( B \right)\omega \left( A{{B}^{*}} \right)}\approx 31.3455.\]
Consequently, Theorems \ref{28} and \ref{7} are generally not comparable.
\end{remark}

%======================================================================================

An easier bound than that in Theorem \ref{7} can be stated in the following form.

\begin{theorem}\label{thm_main_weee}
Let $A,B\in \mathbb B\left( \mathbb H \right)$. Then
\[\omega _{e}^{2}\left( A,B \right)\le \sqrt{\omega \left( {{\left| A \right|}^{2}}+{\rm i}{{\left| B \right|}^{2}} \right)\omega \left( {{\left| {{A}^{*}} \right|}^{2}}+{\rm i}{{\left| {{B}^{*}} \right|}^{2}} \right)+2\omega \left( A \right)\omega \left( B \right)\omega \left( {{B}^{*}}A \right)}.\]
\end{theorem}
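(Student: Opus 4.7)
The plan is to follow essentially the same template as the proofs of Theorems \ref{7} and \ref{28}, starting from the specialization of inequality \eqref{4} with $x\mapsto Ax$, $y\mapsto Bx$, $e\mapsto x$ (for a unit vector $x\in\mathbb H$), which yields
\[
|\langle Ax,x\rangle|^{2}+|\langle Bx,x\rangle|^{2}
\le\sqrt{\,|\langle Ax,x\rangle|^{2}\|Ax\|^{2}+|\langle Bx,x\rangle|^{2}\|Bx\|^{2}+2|\langle Ax,x\rangle||\langle Bx,x\rangle||\langle B^{*}Ax,x\rangle|\,}.
\]
The $B^{*}A$-term is handled exactly as before by dominating each factor by the corresponding numerical radius, giving the summand $2\omega(A)\omega(B)\omega(B^{*}A)$ inside the outer square root. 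The real work is to manipulate the first two summands so that they produce the product $\omega(|A|^{2}+\mathrm{i}|B|^{2})\,\omega(|A^{*}|^{2}+\mathrm{i}|B^{*}|^{2})$ rather than the weaker expression obtained in Theorem \ref{7}.

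For this, I would use two pointwise identities/inequalities in tandem. First, $\|Ax\|^{2}=\langle|A|^{2}x,x\rangle$ and $\|Bx\|^{2}=\langle|B|^{2}x,x\rangle$, which is just the definition of the modulus. Second, the Cauchy--Schwarz inequality applied to $\langle Ax,x\rangle=\langle x,A^{*}x\rangle$ gives $|\langle Ax,x\rangle|^{2}\le\|A^{*}x\|^{2}=\langle|A^{*}|^{2}x,x\rangle$, and similarly with $B$. Plugging these four bounds into the two summands inside the square root, one obtains
\[
|\langle Ax,x\rangle|^{2}\|Ax\|^{2}+|\langle Bx,x\rangle|^{2}\|Bx\|^{2}
\le\langle|A^{*}|^{2}x,x\rangle\langle|A|^{2}x,x\rangle+\langle|B^{*}|^{2}x,x\rangle\langle|B|^{2}x,x\rangle.
\]

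Now the right-hand side is a sum of two products of nonnegative reals, so the Cauchy--Schwarz inequality in $\mathbb{R}^{2}$ gives the bound
\[
\sqrt{\bigl(\langle|A^{*}|^{2}x,x\rangle^{2}+\langle|B^{*}|^{2}x,x\rangle^{2}\bigr)\bigl(\langle|A|^{2}x,x\rangle^{2}+\langle|B|^{2}x,x\rangle^{2}\bigr)}.
\]
Applying the identity $a^{2}+b^{2}=|a+\mathrm{i}b|^{2}$ for real $a,b$ (used already in the proof of Theorem \ref{7}), this becomes $|\langle(|A^{*}|^{2}+\mathrm{i}|B^{*}|^{2})x,x\rangle|\cdot|\langle(|A|^{2}+\mathrm{i}|B|^{2})x,x\rangle|$, which is majorized pointwise by $\omega(|A^{*}|^{2}+\mathrm{i}|B^{*}|^{2})\,\omega(|A|^{2}+\mathrm{i}|B|^{2})$.

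Substituting this bound, together with the $B^{*}A$-bound already discussed, into the square root and taking supremum over unit vectors $x\in\mathbb H$ yields the claimed inequality. The main conceptual step, and the only place where some care is needed, is the decision to split $|\langle Ax,x\rangle|^{2}\|Ax\|^{2}$ asymmetrically: keep $\|Ax\|^{2}$ as $\langle|A|^{2}x,x\rangle$ but bound $|\langle Ax,x\rangle|^{2}$ by $\langle|A^{*}|^{2}x,x\rangle$. This is precisely what forces the two distinct factors $\omega(|A|^{2}+\mathrm{i}|B|^{2})$ and $\omega(|A^{*}|^{2}+\mathrm{i}|B^{*}|^{2})$ to appear; any symmetric choice would collapse one of them back to a $\sqrt{\omega^{4}(A)+\omega^{4}(B)}$-type factor as in Theorem \ref{7}.
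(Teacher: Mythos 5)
Your proposal is correct and follows essentially the same route as the paper's own proof: the same substitution into \eqref{4}, the same use of $|\langle Ax,x\rangle|=|\langle x,A^*x\rangle|\le\|A^*x\|$ together with $\|Ax\|^2=\langle|A|^2x,x\rangle$, the same Cauchy--Schwarz step in $\mathbb{R}^2$, and the same identity $\sqrt{a^2+b^2}=|a+\mathrm{i}b|$ before passing to numerical radii and taking the supremum. No gaps.
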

\begin{proof}
By substituting $x=Ax$, $y=Bx$, and $e=x$,  in \eqref{4}, we obtain
{\small
\[\begin{aligned}
  & {{\left| \left\langle Ax,x \right\rangle  \right|}^{2}}+{{\left| \left\langle Bx,x \right\rangle  \right|}^{2}} \\ 
 & \le \sqrt{{{\left| \left\langle Ax,x \right\rangle  \right|}^{2}}{{\left\| Ax \right\|}^{2}}+{{\left| \left\langle Bx,x \right\rangle  \right|}^{2}}{{\left\| Bx \right\|}^{2}}+2\left| \left\langle Ax,x \right\rangle  \right|\left| \left\langle Bx,x \right\rangle  \right|\left| \left\langle {{B}^{*}}Ax,x \right\rangle  \right|} \\ 
 & =\sqrt{{{\left| \left\langle x,{{A}^{*}}x \right\rangle  \right|}^{2}}{{\left\| Ax \right\|}^{2}}+{{\left| \left\langle x,{{B}^{*}}x \right\rangle  \right|}^{2}}{{\left\| Bx \right\|}^{2}}+2\left| \left\langle Ax,x \right\rangle  \right|\left| \left\langle Bx,x \right\rangle  \right|\left| \left\langle {{B}^{*}}Ax,x \right\rangle  \right|} \\ 
 & \le \sqrt{{{\left\| {{A}^{*}}x \right\|}^{2}}{{\left\| Ax \right\|}^{2}}+{{\left\| {{B}^{*}}x \right\|}^{2}}{{\left\| Bx \right\|}^{2}}+2\left| \left\langle Ax,x \right\rangle  \right|\left| \left\langle Bx,x \right\rangle  \right|\left| \left\langle {{B}^{*}}Ax,x \right\rangle  \right|} \\ 
 &\quad \text{(by the Cauchy-Schwarz inequality)}\\
 & \le \sqrt{\sqrt{\left( {{\left\langle {{\left| A \right|}^{2}}x,x \right\rangle }^{2}}+{{\left\langle {{\left| B \right|}^{2}}x,x \right\rangle }^{2}} \right)\left( {{\left\langle {{\left| {{A}^{*}} \right|}^{2}}x,x \right\rangle }^{2}}+{{\left\langle {{\left| {{B}^{*}} \right|}^{2}}x,x \right\rangle }^{2}} \right)}+2\left| \left\langle Ax,x \right\rangle  \right|\left| \left\langle Bx,x \right\rangle  \right|\left| \left\langle {{B}^{*}}Ax,x \right\rangle  \right|} \\ 
  &\quad \text{(by the Cauchy-Schwarz inequality)}\\
 & =\sqrt{\left| \left\langle \left( {{\left| A \right|}^{2}}+{\rm i}{{\left| B \right|}^{2}} \right)x,x \right\rangle  \right|\left| \left\langle \left( {{\left| {{A}^{*}} \right|}^{2}}+{\rm i}{{\left| {{B}^{*}} \right|}^{2}} \right)x,x \right\rangle  \right|+2\left| \left\langle Ax,x \right\rangle  \right|\left| \left\langle Bx,x \right\rangle  \right|\left| \left\langle {{B}^{*}}Ax,x \right\rangle  \right|} \\ 
 & \le \sqrt{\omega \left( {{\left| A \right|}^{2}}+{\rm i}{{\left| B \right|}^{2}} \right)\omega \left( {{\left| {{A}^{*}} \right|}^{2}}+{\rm i}{{\left| {{B}^{*}} \right|}^{2}} \right)+2\omega \left( A \right)\omega \left( B \right)\omega \left( {{B}^{*}}A \right)},
\end{aligned}\]
}
i.e.,
\[{{\left| \left\langle Ax,x \right\rangle  \right|}^{2}}+{{\left| \left\langle Bx,x \right\rangle  \right|}^{2}}\le \sqrt{\omega \left( {{\left| A \right|}^{2}}+{\rm i}{{\left| B \right|}^{2}} \right)\omega \left( {{\left| {{A}^{*}} \right|}^{2}}+{\rm i}{{\left| {{B}^{*}} \right|}^{2}} \right)+2\omega \left( A \right)\omega \left( B \right)\omega \left( {{B}^{*}}A \right)}.\]
Now, we receive the desired result by taking supremum over all unit vectors $x\in \mathbb H$.
\end{proof}
%============================================================================================
\begin{remark}
If $A=\left[
\begin{array}{cc}
 4 & -2 \\
 -4 & -5 \\
\end{array}
\right]$ and $B=\left[
\begin{array}{cc}
 2 & 5 \\
 2 & 4 \\
\end{array}
\right]$, we see that
\[\sqrt{\omega \left( {{\left| A \right|}^{2}}+{\rm i}{{\left| B \right|}^{2}} \right)\omega \left( {{\left| {{A}^{*}} \right|}^{2}}+{\rm i}{{\left| {{B}^{*}} \right|}^{2}} \right)+2\omega \left( A \right)\omega \left( B \right)\omega \left( {{B}^{*}}A \right)}\approx 76.375,\]
\[ \sqrt{\sqrt{\left( {{\omega }^{4}}\left( A \right)+{{\omega }^{4}}\left( B \right) \right)}\;\omega \left( {{\left| A \right|}^{2}}+{\rm i}{{\left| B \right|}^{2}} \right)+2\omega \left( A \right)\omega \left( B \right)\omega \left( {{B}^{*}}A \right)}\approx 77.146,\]
and 
\[\sqrt{\left\| {{\omega }^{2}}\left( A \right){{\left| {{A}^{*}} \right|}^{2}}+{{\omega }^{2}}\left( B \right){{\left| {{B}^{*}} \right|}^{2}} \right\|+2\omega \left( A \right)\omega \left( B \right)\omega \left( A{{B}^{*}} \right)}\approx 76.3889.\]

However, if we let $A=\left[
\begin{array}{cc}
 -5 & 1 \\
 5 & 3 \\
\end{array}
\right]$ and $B=\left[
\begin{array}{cc}
 -5 & -2 \\
 1 & -4 \\
\end{array}
\right],$ we see that
\[\sqrt{\omega \left( {{\left| A \right|}^{2}}+{\rm i}{{\left| B \right|}^{2}} \right)\omega \left( {{\left| {{A}^{*}} \right|}^{2}}+{\rm i}{{\left| {{B}^{*}} \right|}^{2}} \right)+2\omega \left( A \right)\omega \left( B \right)\omega \left( {{B}^{*}}A \right)}\approx 72.465,\]
\[ \sqrt{\sqrt{\left( {{\omega }^{4}}\left( A \right)+{{\omega }^{4}}\left( B \right) \right)}\;\omega \left( {{\left| A \right|}^{2}}+{\rm i}{{\left| B \right|}^{2}} \right)+2\omega \left( A \right)\omega \left( B \right)\omega \left( {{B}^{*}}A \right)}\approx 67.9146,\]
and 
\[\sqrt{\left\| {{\omega }^{2}}\left( A \right){{\left| {{A}^{*}} \right|}^{2}}+{{\omega }^{2}}\left( B \right){{\left| {{B}^{*}} \right|}^{2}} \right\|+2\omega \left( A \right)\omega \left( B \right)\omega \left( A{{B}^{*}} \right)}\approx 66.9069.\]
These two examples show that  Theorem \ref{thm_main_weee} is not comparable, in general, with Theorems \ref{28} and \ref{7}. 
\end{remark}
%==========================================================================================

\subsection{Upper bounds for $\|(\cdot,\cdot)\|_e$}
Now we discuss some possible bounds for $\|(\cdot,\cdot)\|_e$.
\begin{theorem}\label{thm_main_opere}
Let $A,B\in \mathbb B\left( \mathbb H \right)$. Then
\[\left\| \left( A,B \right) \right\|_{e}^{2}\le \sqrt{\left\| {{\left\| A \right\|}^{2}}{{\left| A \right|}^{2}}+{{\left\| B \right\|}^{2}}{{\left| B \right|}^{2}} \right\|+\frac{1}{2}\left\| \;\left| A \right|+\left| B \right| \;\right\|\left\| \;\left| {{A}^{*}} \right|+\left| {{B}^{*}} \right| \;\right\|\omega \left( {{A}^{*}}B \right)}.\]
\end{theorem}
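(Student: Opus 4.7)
The plan is to apply the generalized Cauchy--Schwarz inequality \eqref{4} with the substitution $x\mapsto Ax$, $y\mapsto Bx$, and $e\mapsto y$, where $x,y\in\mathbb H$ are unit vectors. This is the natural two-vector analogue of the substitutions used in the preceding theorems on $\omega_e(A,B)$. After the substitution one obtains
\[\left|\left\langle Ax,y\right\rangle\right|^2+\left|\left\langle Bx,y\right\rangle\right|^2\le\sqrt{\left|\left\langle Ax,y\right\rangle\right|^2\|Ax\|^2+\left|\left\langle Bx,y\right\rangle\right|^2\|Bx\|^2+2\left|\left\langle Ax,y\right\rangle\right|\left|\left\langle Bx,y\right\rangle\right|\left|\left\langle Ax,Bx\right\rangle\right|},\]
which splits the task into bounding the two ``diagonal'' terms and the cross term separately.

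For the diagonal terms I would use $\left|\left\langle Ax,y\right\rangle\right|^2\le\|A\|^2$ (valid because $\|x\|=\|y\|=1$) together with $\|Ax\|^2=\left\langle|A|^2 x,x\right\rangle$, and similarly for $B$. Summing produces the scalar $\left\langle(\|A\|^2|A|^2+\|B\|^2|B|^2)x,x\right\rangle$, which is in turn majorised by $\left\|\|A\|^2|A|^2+\|B\|^2|B|^2\right\|$ since the bracketed operator is positive.

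The cross term is the core difficulty: a direct AM--GM estimate on $\left|\left\langle Ax,y\right\rangle\right|\left|\left\langle Bx,y\right\rangle\right|$ would reintroduce the left-hand side and yield only a circular bound. To evade this I would decouple the $x$-dependence from the $y$-dependence through the mixed Schwarz inequality, writing
\[\left|\left\langle Ax,y\right\rangle\right|\left|\left\langle Bx,y\right\rangle\right|\le\sqrt{\left\langle|A|x,x\right\rangle\left\langle|B|x,x\right\rangle}\,\sqrt{\left\langle|A^*|y,y\right\rangle\left\langle|B^*|y,y\right\rangle},\]
and then apply the scalar inequality $\sqrt{uv}\le\tfrac12(u+v)$ separately to each pair. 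The $x$-factor is then controlled by $\tfrac12\left\||A|+|B|\right\|$ and the $y$-factor by $\tfrac12\left\||A^*|+|B^*|\right\|$. Combined with $\left|\left\langle Ax,Bx\right\rangle\right|=\left|\left\langle B^*Ax,x\right\rangle\right|\le\omega(B^*A)=\omega(A^*B)$, the leading factor of $2$ absorbs one of the halves, giving $\tfrac12\left\||A|+|B|\right\|\left\||A^*|+|B^*|\right\|\omega(A^*B)$.

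Substituting both estimates under the square root and passing to the supremum over all unit $x,y\in\mathbb H$ yields the theorem. The principal obstacle, as emphasised, is the cross term, where the naive AM--GM is circular; the key device is the mixed Schwarz split that decouples the two vector variables before AM--GM is applied. The remaining identities required, namely $\omega(B^*A)=\omega(A^*B)$ (from $\omega(X)=\omega(X^*)$) and $\left\langle Cx,x\right\rangle\le\|C\|$ for positive $C$, are standard.
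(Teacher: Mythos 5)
Your proof is correct, and it reaches the paper's bound by a slightly different route through the middle of the argument. The paper starts from the intermediate vector-norm form \eqref{26} with the substitutions $y\mapsto A^{*}y$, $z\mapsto B^{*}y$, which forces it through a chain of operator-norm manipulations: the identity $\left\| XX^{*}\right\| =\left\| X\right\| ^{2}$ applied to $\left\langle Ax,y\right\rangle A^{*}+\left\langle Bx,y\right\rangle B^{*}$, the triangle inequality for the operator norm, and the estimate $\left\| \Re X\right\| \le \omega \left( X\right)$ to extract $\omega \left( A^{*}B\right)$ from the operator cross term $\Re \bigl( \left\langle Ax,y\right\rangle \overline{\left\langle Bx,y\right\rangle }A^{*}B\bigr)$. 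You instead apply the fully scalarized inequality \eqref{4} with $x\mapsto Ax$, $y\mapsto Bx$, $e\mapsto y$, so the cross term appears directly as the scalar $\left| \left\langle B^{*}Ax,x\right\rangle \right| \le \omega \left( B^{*}A\right) =\omega \left( A^{*}B\right)$ and no operator algebra is needed; your diagonal estimate $\left\langle \bigl( \left\| A\right\| ^{2}\left| A\right| ^{2}+\left\| B\right\| ^{2}\left| B\right| ^{2}\bigr) x,x\right\rangle \le \bigl\| \left\| A\right\| ^{2}\left| A\right| ^{2}+\left\| B\right\| ^{2}\left| B\right| ^{2}\bigr\|$ lands on the same quantity the paper obtains. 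The treatment of the cross term via the mixed Schwarz inequality followed by AM--GM on each vector variable is identical in both arguments, and you correctly identify it as the step where a naive estimate would be circular. The net effect is that your version is somewhat more elementary (everything after the first display is a scalar inequality), at the cost of not producing the operator-valued intermediate bounds that the paper's method incidentally yields along the way.
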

\begin{proof}
Letting $y={{A}^{*}}y$, $z={{B}^{*}}y$, and $\left\| x \right\|=1$, in \eqref{26}, we infer that
\[\begin{aligned}
  & {{\left| \left\langle Ax,y \right\rangle  \right|}^{2}}+{{\left| \left\langle Bx,y \right\rangle  \right|}^{2}} \\ 
 & \le \left\| \left( \left\langle Ax,y \right\rangle {{A}^{*}}+\left\langle Bx,y \right\rangle {{B}^{*}} \right)y \right\| \\ 
 & \le \left\| \left\langle Ax,y \right\rangle {{A}^{*}}+\left\langle Bx,y \right\rangle {{B}^{*}} \right\| \\ 
 & ={{\left\| \left( \left\langle Ax,y \right\rangle {{A}^{*}}+\left\langle Bx,y \right\rangle {{B}^{*}} \right)\left( \overline{\left\langle Ax,y \right\rangle }A+\overline{\left\langle Bx,y \right\rangle }B \right) \right\|}^{\frac{1}{2}}} \\ 
  &\quad \text{(since $\left\| X{{X}^{*}} \right\|={{\left\| X \right\|}^{2}}$ for any $X\in \mathbb B\left( \mathbb H \right)$)}\\
 & ={{\left\| {{\left| \left\langle Ax,y \right\rangle  \right|}^{2}}{{\left| A \right|}^{2}}+{{\left| \left\langle Bx,y \right\rangle  \right|}^{2}}{{\left| B \right|}^{2}}+2\Re\left( \left\langle Ax,y \right\rangle \overline{\left\langle Bx,y \right\rangle }{{A}^{*}}B \right) \right\|}^{\frac{1}{2}}} \\ 
 & \le \sqrt{\left\| {{\left| \left\langle Ax,y \right\rangle  \right|}^{2}}{{\left| A \right|}^{2}}+{{\left| \left\langle Bx,y \right\rangle  \right|}^{2}}{{\left| B \right|}^{2}} \right\|+2\left\| \Re\left( \left\langle Ax,y \right\rangle \overline{\left\langle Bx,y \right\rangle }{{A}^{*}}B \right) \right\|} \\ 
  &\quad \text{(by the triangle inequality for the usual operator norm)}\\
 & \le \sqrt{\left\| {{\left| \left\langle Ax,y \right\rangle  \right|}^{2}}{{\left| A \right|}^{2}}+{{\left| \left\langle Bx,y \right\rangle  \right|}^{2}}{{\left| B \right|}^{2}} \right\|+2\left| \left\langle Ax,y \right\rangle \overline{\left\langle Bx,y \right\rangle } \right|\omega \left( {{A}^{*}}B \right)}. 
\end{aligned}\]
On the other hand,
\[\begin{aligned}
  & \left\| {{\left| \left\langle Ax,y \right\rangle  \right|}^{2}}{{\left| A \right|}^{2}}+{{\left| \left\langle Bx,y \right\rangle  \right|}^{2}}{{\left| B \right|}^{2}} \right\|+2\left| \left\langle Ax,y \right\rangle  \right|\left| \left\langle Bx,y \right\rangle  \right|\omega \left( {{A}^{*}}B \right) \\ 
 & \le \left\| {{\left\| A \right\|}^{2}}{{\left| A \right|}^{2}}+{{\left\| B \right\|}^{2}}{{\left| B \right|}^{2}} \right\|+2\left| \left\langle Ax,y \right\rangle  \right|\left| \left\langle Bx,y \right\rangle  \right|\omega \left( {{A}^{*}}B \right) \\ 
 & \le \left\| {{\left\| A \right\|}^{2}}{{\left| A \right|}^{2}}+{{\left\| B \right\|}^{2}}{{\left| B \right|}^{2}} \right\|+2\sqrt{\left\langle \left| A \right|x,x \right\rangle \left\langle \left| {{A}^{*}} \right|y,y \right\rangle }\sqrt{\left\langle \left| B \right|x,x \right\rangle \left\langle \left| {{B}^{*}} \right|y,y \right\rangle }\omega \left( {{A}^{*}}B \right) \\ 
 &\quad \text{(by the mixed Schwarz inequality)}\\
 & \le \left\| {{\left\| A \right\|}^{2}}{{\left| A \right|}^{2}}+{{\left\| B \right\|}^{2}}{{\left| B \right|}^{2}} \right\|+\frac{1}{2}\left\langle \left( \left| A \right|+\left| B \right| \right)x,x \right\rangle \left\langle \left( \left| {{A}^{*}} \right|+\left| {{B}^{*}} \right| \right)y,y \right\rangle \omega \left( {{A}^{*}}B \right) \\ 
  &\quad \text{(by the arithmetic-geometric mean inequality)}\\
 & \le \left\| {{\left\| A \right\|}^{2}}{{\left| A \right|}^{2}}+{{\left\| B \right\|}^{2}}{{\left| B \right|}^{2}} \right\|+\frac{1}{2}\left\| \;\left| A \right|+\left| B \right| \;\right\|\left\| \;\left| {{A}^{*}} \right|+\left| {{B}^{*}} \right| \;\right\|\omega \left( {{A}^{*}}B \right).
\end{aligned}\]
Accordingly,
\[{{\left| \left\langle Ax,y \right\rangle  \right|}^{2}}+{{\left| \left\langle Bx,y \right\rangle  \right|}^{2}}\le \sqrt{\left\| {{\left\| A \right\|}^{2}}{{\left| A \right|}^{2}}+{{\left\| B \right\|}^{2}}{{\left| B \right|}^{2}} \right\|+\frac{1}{2}\left\| \;\left| A \right|+\left| B \right| \;\right\|\left\| \;\left| {{A}^{*}} \right|+\left| {{B}^{*}} \right| \;\right\|\omega \left( {{A}^{*}}B \right)},\]
which implies the desired inequality after taking supremum over all unit vectors $x,y\in \mathbb H$.
\end{proof}

\begin{theorem}\label{thm_main_operee}
Let $A,B\in \mathbb B\left( \mathbb H \right)$. Then
\[\left\| \left( A,B \right) \right\|_{e}^{2}\le \sqrt{\omega \left( {{\left| A \right|}^{2}}+{\rm i}{{\left| B \right|}^{2}} \right)\omega \left( {{\left| {{A}^{*}} \right|}^{2}}+{\rm i}{{\left| {{B}^{*}} \right|}^{2}} \right)+\omega \left( \left| A \right|+{\rm i}\left| B \right| \right)\omega \left( \left| {{A}^{*}} \right|+{\rm i}\left| {{B}^{*}} \right| \right)\omega \left( B{{A}^{*}} \right)}.\]
\end{theorem}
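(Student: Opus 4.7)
The plan is to adapt the proof of Theorem \ref{thm_main_weee} to the two-vector inner products $\langle Ax, y\rangle$ and $\langle Bx, y\rangle$ that appear in the alternative formula for $\|(A,B)\|_e$. First, I would apply the generalized Cauchy-Schwarz inequality \eqref{5} with $y$ and $z$ replaced by $A^{*}y$ and $B^{*}y$, respectively, and with $x$ a unit vector; using $\langle x, A^{*}y\rangle = \langle Ax, y\rangle$ and $\langle A^{*}y, B^{*}y\rangle = \langle BA^{*}y, y\rangle$, this produces the pointwise estimate
\[|\langle Ax, y\rangle|^{2}+|\langle Bx, y\rangle|^{2} \le \sqrt{|\langle Ax, y\rangle|^{2}\|A^{*}y\|^{2}+|\langle Bx, y\rangle|^{2}\|B^{*}y\|^{2}+2|\langle Ax, y\rangle||\langle Bx, y\rangle||\langle BA^{*}y, y\rangle|}.\]
The remaining work is then to bound the two groups of terms inside the square root in the forms demanded by the theorem.

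For the ``diagonal'' summands I would apply the ordinary Cauchy-Schwarz estimate $|\langle Ax, y\rangle|^{2}\le \|Ax\|^{2}\|y\|^{2} = \langle |A|^{2}x, x\rangle$ (and similarly for $B$), and then use Cauchy-Schwarz once more, exactly as in the proof of Theorem \ref{thm_main_weee}, to decouple the $x$- and $y$-dependence:
\[\langle |A|^{2}x,x\rangle\langle |A^{*}|^{2}y,y\rangle + \langle |B|^{2}x,x\rangle\langle |B^{*}|^{2}y,y\rangle \le \sqrt{\left(\langle |A|^{2}x,x\rangle^{2}+\langle |B|^{2}x,x\rangle^{2}\right)\left(\langle |A^{*}|^{2}y,y\rangle^{2}+\langle |B^{*}|^{2}y,y\rangle^{2}\right)}.\]
Recognising each factor via $|a+{\rm i}b|=\sqrt{a^{2}+b^{2}}$ and taking suprema over unit vectors yields the factor $\omega(|A|^{2}+{\rm i}|B|^{2})\,\omega(|A^{*}|^{2}+{\rm i}|B^{*}|^{2})$.

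For the cross term I would apply the mixed Schwarz inequality twice to obtain $|\langle Ax, y\rangle||\langle Bx, y\rangle|\le \sqrt{\langle |A|x,x\rangle\langle |B|x,x\rangle}\cdot\sqrt{\langle |A^{*}|y,y\rangle\langle |B^{*}|y,y\rangle}$, and then use $\sqrt{ab}\le \sqrt{(a^{2}+b^{2})/2}$ on each radical to produce $\tfrac{1}{\sqrt{2}}|\langle (|A|+{\rm i}|B|)x,x\rangle|$ and its $y$-analogue; combined with $|\langle BA^{*}y, y\rangle|\le\omega(BA^{*})$, this contributes exactly $\omega(|A|+{\rm i}|B|)\,\omega(|A^{*}|+{\rm i}|B^{*}|)\,\omega(BA^{*})$. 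Substituting both estimates into the pointwise bound above and passing to the supremum over unit $x,y\in\mathbb{H}$ completes the proof. The main delicate choice is to pick, at each step, the variant of Cauchy-Schwarz (or mixed Schwarz) that cleanly decouples the $x$- and $y$-dependence so that each supremum produces an $\omega(\cdot)$ of the specific ``real-plus-imaginary'' combinations appearing in the statement; a sloppier pairing would yield operator norms or crude products of $\omega(A)$ and $\omega(B)$ that miss the stated bound.
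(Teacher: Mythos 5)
Your proposal is correct and follows essentially the same route as the paper: the substitution $y\mapsto A^{*}y$, $z\mapsto B^{*}y$ in \eqref{5}, ordinary Cauchy--Schwarz plus a decoupling Cauchy--Schwarz for the diagonal terms, and the mixed Schwarz inequality for the cross term. The only cosmetic difference is in the cross term, where you apply $\sqrt{ab}\le\sqrt{(a^{2}+b^{2})/2}$ separately to the $x$-factors and the $y$-factors, while the paper uses the arithmetic--geometric mean inequality on the full products followed by Cauchy--Schwarz; both yield the identical intermediate bound $\left|\left\langle\left(\left|A\right|+{\rm i}\left|B\right|\right)x,x\right\rangle\right|\left|\left\langle\left(\left|A^{*}\right|+{\rm i}\left|B^{*}\right|\right)y,y\right\rangle\right|$.
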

\begin{proof}
Letting $y={{A}^{*}}y$ and $z={{B}^{*}}y$, with $\left\| x \right\|=\left\| y \right\|=1$, in \eqref{5}, we obtain
{\tiny	
\[\begin{aligned}
  & {{\left| \left\langle Ax,y \right\rangle  \right|}^{2}}+{{\left| \left\langle Bx,y \right\rangle  \right|}^{2}} \\ 
 & \le \sqrt{{{\left| \left\langle Ax,y \right\rangle  \right|}^{2}}{{\left\| {{A}^{*}}y \right\|}^{2}}+{{\left| \left\langle Bx,y \right\rangle  \right|}^{2}}{{\left\| {{B}^{*}}y \right\|}^{2}}+2\left| \left\langle Ax,y \right\rangle  \right|\left| \left\langle Bx,y \right\rangle  \right|\left| \left\langle B{{A}^{*}}y,y \right\rangle  \right|} \\ 
 & \le \sqrt{{{\left\| Ax \right\|}^{2}}{{\left\| {{A}^{*}}y \right\|}^{2}}+{{\left\| Bx \right\|}^{2}}{{\left\| {{B}^{*}}y \right\|}^{2}}+2\left| \left\langle Ax,y \right\rangle  \right|\left| \left\langle Bx,y \right\rangle  \right|\left| \left\langle B{{A}^{*}}y,y \right\rangle  \right|} \\ 
 &\quad \text{(by the Cauchy-Schwarz inequality)}\\
 & =\sqrt{\left\langle {{\left| A \right|}^{2}}x,x \right\rangle \left\langle {{\left| {{A}^{*}} \right|}^{2}}y,y \right\rangle +\left\langle {{\left| B \right|}^{2}}x,x \right\rangle \left\langle {{\left| {{B}^{*}} \right|}^{2}}y,y \right\rangle +2\left| \left\langle Ax,y \right\rangle  \right|\left| \left\langle Bx,y \right\rangle  \right|\left| \left\langle B{{A}^{*}}y,y \right\rangle  \right|} \\ 
 & \le \sqrt{\left\langle {{\left| A \right|}^{2}}x,x \right\rangle \left\langle {{\left| {{A}^{*}} \right|}^{2}}y,y \right\rangle +\left\langle {{\left| B \right|}^{2}}x,x \right\rangle \left\langle {{\left| {{B}^{*}} \right|}^{2}}y,y \right\rangle +2\sqrt{\left\langle \left| A \right|x,x \right\rangle \left\langle \left| {{A}^{*}} \right|y,y \right\rangle \left\langle \left| B \right|x,x \right\rangle \left\langle \left| {{B}^{*}} \right|y,y \right\rangle }\left| \left\langle B{{A}^{*}}y,y \right\rangle  \right|} \\ 
  &\quad \text{(by the mixed Schwarz inequality)}\\
 & \le \sqrt{\left\langle {{\left| A \right|}^{2}}x,x \right\rangle \left\langle {{\left| {{A}^{*}} \right|}^{2}}y,y \right\rangle +\left\langle {{\left| B \right|}^{2}}x,x \right\rangle \left\langle {{\left| {{B}^{*}} \right|}^{2}}y,y \right\rangle +\left( \left\langle \left| A \right|x,x \right\rangle \left\langle \left| {{A}^{*}} \right|y,y \right\rangle +\left\langle \left| B \right|x,x \right\rangle \left\langle \left| {{B}^{*}} \right|y,y \right\rangle  \right)\left| \left\langle B{{A}^{*}}y,y \right\rangle  \right|} \\ 
   &\quad \text{(by the arithmetic-geometric mean inequality)}\\
 & \le \sqrt{\left( {{\left\langle {{\left| A \right|}^{2}}x,x \right\rangle }^{2}}+{{\left\langle {{\left| B \right|}^{2}}x,x \right\rangle }^{2}} \right)^{\frac{1}{2}}\left( {{\left\langle {{\left| {{A}^{*}} \right|}^{2}}y,y \right\rangle }^{2}}+{{\left\langle {{\left| {{B}^{*}} \right|}^{2}}y,y \right\rangle }^{2}} \right)^{\frac{1}{2}}+\left( {{\left\langle \left| A \right|x,x \right\rangle }^{2}}+{{\left\langle \left| B \right|x,x \right\rangle }^{2}} \right)^{\frac{1}{2}}\left( {{\left\langle \left| {{A}^{*}} \right|y,y \right\rangle }^{2}}+{{\left\langle \left| {{B}^{*}} \right|y,y \right\rangle }^{2}} \right)^{\frac{1}{2}}\left| \left\langle B{{A}^{*}}y,y \right\rangle  \right|} \\ 
  &\quad \text{(by the Cauchy-Schwarz inequality)}\\
 & =\sqrt{\left| \left\langle \left( {{\left| A \right|}^{2}}+{\rm i}{{\left| B \right|}^{2}} \right)x,x \right\rangle  \right|\left| \left\langle \left( {{\left| {{A}^{*}} \right|}^{2}}+{\rm i}{{\left| {{B}^{*}} \right|}^{2}} \right)y,y \right\rangle  \right|+\left| \left\langle \left( \left| A \right|+{\rm i}\left| B \right| \right)x,x \right\rangle  \right|\left| \left\langle \left( \left| {{A}^{*}} \right|+{\rm i}\left| {{B}^{*}} \right| \right)y,y \right\rangle  \right|\left| \left\langle B{{A}^{*}}y,y \right\rangle  \right|} \\ 
   &\quad \text{(since $\left| a+{\rm i}b \right|=\sqrt{{{a}^{2}}+{{b}^{2}}}$ for any $a,b\in \mathbb{R}$)}\\
 & \le\sqrt{\omega \left( {{\left| A \right|}^{2}}+{\rm i}{{\left| B \right|}^{2}} \right)\omega \left( {{\left| {{A}^{*}} \right|}^{2}}+{\rm i}{{\left| {{B}^{*}} \right|}^{2}} \right)+\omega \left( \left| A \right|+{\rm i}\left| B \right| \right)\omega \left( \left| {{A}^{*}} \right|+{\rm i}\left| {{B}^{*}} \right| \right)\omega \left( B{{A}^{*}} \right)},  
\end{aligned}\]
}
i.e.,
\begin{equation}\label{14}
\begin{aligned}
  & {{\left| \left\langle Ax,y \right\rangle  \right|}^{2}}+{{\left| \left\langle Bx,y \right\rangle  \right|}^{2}} \\ 
 & \le \sqrt{\omega \left( {{\left| A \right|}^{2}}+{\rm i}{{\left| B \right|}^{2}} \right)\omega \left( {{\left| {{A}^{*}} \right|}^{2}}+{\rm i}{{\left| {{B}^{*}} \right|}^{2}} \right)+\omega \left( \left| A \right|+{\rm i}\left| B \right| \right)\omega \left( \left| {{A}^{*}} \right|+{\rm i}\left| {{B}^{*}} \right| \right)\omega \left( B{{A}^{*}} \right)}.  
\end{aligned}
\end{equation}
Now, we receive the desired result by taking supremum over all unit vectors $x,y\in \mathbb H$.
\end{proof}
%=======================================================================================
\begin{remark}
In this remark, we compare the two bounds found in Theorems \ref{thm_main_opere} and \ref{thm_main_operee}. Indeed, letting $A=\left[
\begin{array}{cc}
 0 & 1 \\
 1 & 0 \\
\end{array}
\right]$ and $B=\left[
\begin{array}{cc}
 1 & 1 \\
 0 & 1 \\
\end{array}
\right]$, we find that
\[\sqrt{\left\| {{\left\| A \right\|}^{2}}{{\left| A \right|}^{2}}+{{\left\| B \right\|}^{2}}{{\left| B \right|}^{2}} \right\|+\frac{1}{2}\left\| \;\left| A \right|+\left| B \right| \;\right\|\left\| \;\left| {{A}^{*}} \right|+\left| {{B}^{*}} \right| \;\right\|\omega \left( {{A}^{*}}B \right)}\approx 3.02706,\]
and
\[\sqrt{\omega \left( {{\left| A \right|}^{2}}+{\rm i}{{\left| B \right|}^{2}} \right)\omega \left( {{\left| {{A}^{*}} \right|}^{2}}+{\rm i}{{\left| {{B}^{*}} \right|}^{2}} \right)+\omega \left( \left| A \right|+{\rm i}\left| B \right| \right)\omega \left( \left| {{A}^{*}} \right|+{\rm i}\left| {{B}^{*}} \right| \right)\omega \left( B{{A}^{*}} \right)}\approx 3.70246.\]

On the other hand, letting $A=\left[
\begin{array}{cc}
 1 & 1 \\
 0 & 0 \\
\end{array}
\right],$ and $B=\left[
\begin{array}{cc}
 0 & 1 \\
 1 & 0 \\
\end{array}
\right],$ we find
\[\sqrt{\left\| {{\left\| A \right\|}^{2}}{{\left| A \right|}^{2}}+{{\left\| B \right\|}^{2}}{{\left| B \right|}^{2}} \right\|+\frac{1}{2}\left\| \;\left| A \right|+\left| B \right| \;\right\|\left\| \;\left| {{A}^{*}} \right|+\left| {{B}^{*}} \right| \;\right\|\omega \left( {{A}^{*}}B \right)}\approx 3.08509,\]
and
\[\sqrt{\omega \left( {{\left| A \right|}^{2}}+{\rm i}{{\left| B \right|}^{2}} \right)\omega \left( {{\left| {{A}^{*}} \right|}^{2}}+{\rm i}{{\left| {{B}^{*}} \right|}^{2}} \right)+\omega \left( \left| A \right|+{\rm i}\left| B \right| \right)\omega \left( \left| {{A}^{*}} \right|+{\rm i}\left| {{B}^{*}} \right| \right)\omega \left( B{{A}^{*}} \right)}\approx 2.93621.\]

These examples show that neither Theorem \ref{thm_main_opere} nor Theorem \ref{thm_main_operee} is uniformly better than the other.
\end{remark}

%======================================================================================

\subsection{Applications towards the numerical radius}\label{section_w}
If we put $A=T$ and $B={{T}^{*}}$, in Theorem \ref{2}, we reach to the following result due to Dragomir \cite[Theorem 1]{3}.
\begin{corollary}\label{8}
Let $T\in \mathbb B\left( \mathbb H \right)$. Then
\begin{equation}\label{eq_upper_bound_2}
{{\omega }^{2}}\left( T \right)\le \frac{1}{2}{{\left\| T \right\|}^{2}}+\frac{1}{2}\omega \left( {{T}^{2}} \right).
\end{equation}

\end{corollary}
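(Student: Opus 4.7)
The plan is to obtain Corollary \ref{8} as a direct specialization of Theorem \ref{2} with $A=T$ and $B=T^*$. The key point is to identify what the Euclidean operator radius $\omega_e(T,T^*)$ simplifies to, and what each term on the right-hand side becomes.

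First, I would compute $\omega_e^2(T,T^*)$ from its definition. For any unit vector $x\in\mathbb{H}$, we have
\[
\langle T^*x,x\rangle=\overline{\langle x,T^*x\rangle}=\overline{\langle Tx,x\rangle},
\]
hence $|\langle T^*x,x\rangle|=|\langle Tx,x\rangle|$. Consequently
\[
|\langle Tx,x\rangle|^2+|\langle T^*x,x\rangle|^2=2|\langle Tx,x\rangle|^2,
\]
and taking the supremum over all unit vectors gives $\omega_e^2(T,T^*)=2\omega^2(T)$.

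Next, I would plug the substitution into Theorem \ref{2}. Since $\omega(T^*)=\omega(T)$, $\|T^*\|=\|T\|$, and $B^*A=(T^*)^*T=T^2$, so $\omega(B^*A)=\omega(T^2)$, the right-hand side becomes
\[
\sqrt{2\omega^2(T)\|T\|^2+2\omega^2(T)\omega(T^2)}=\sqrt{2}\,\omega(T)\sqrt{\|T\|^2+\omega(T^2)}.
\]
Combining this with the identity for $\omega_e^2(T,T^*)$ yields
\[
2\omega^2(T)\le\sqrt{2}\,\omega(T)\sqrt{\|T\|^2+\omega(T^2)}.
\]

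Finally, if $\omega(T)=0$ the inequality is trivial; otherwise divide both sides by $\sqrt{2}\,\omega(T)$ and square to obtain $2\omega^2(T)\le\|T\|^2+\omega(T^2)$, which is \eqref{eq_upper_bound_2}. There is no real obstacle here; the only subtle step is the elementary observation $\omega_e^2(T,T^*)=2\omega^2(T)$, and the rest is algebraic bookkeeping.
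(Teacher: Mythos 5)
Your proposal is correct and follows exactly the paper's route: the paper obtains Corollary \ref{8} precisely by setting $A=T$ and $B=T^*$ in Theorem \ref{2}, and your computation of $\omega_e^2(T,T^*)=2\omega^2(T)$, the identification $B^*A=T^2$, and the final division-and-squaring step supply correctly the details the paper leaves implicit.
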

\begin{remark}
We know from \eqref{eq_kitt_2} that
\begin{equation}\label{eq_kitt_square}
\omega^2(T)\leq \frac{1}{2}\left\| |T|^2+|T^*|^2\right\|.
\end{equation}
In this remark, we give two examples to show that neither this bound nor the bound found in Corollary \ref{8} is uniformly better than the other. 
\begin{itemize}
\item[(i)] 

If we take $T=\left[
\begin{array}{cc}
 -5 & 1 \\
 4 & 4 \\
\end{array}
\right],$ then direct calculations illustrate that
\[\frac{1}{2}\left\| |T|^2+|T^*|^2\right\|\approx 34.1478\;{\text{and}}\;\frac{1}{2}{{\left\| T \right\|}^{2}}+\frac{1}{2}\omega \left( {{T}^{2}} \right)\approx 37.4633.\] This shows, for this $T$, that \eqref{eq_kitt_square} is better than \eqref{eq_upper_bound_2}.

However, if we take $T=\left[
\begin{array}{cc}
 1 & 0 \\
 1 & 0 \\
\end{array}
\right],$ we find that
\[\frac{1}{2}\left\| |T|^2+|T^*|^2\right\|\approx 1.70711\;{\text{and}}\;\frac{1}{2}{{\left\| T \right\|}^{2}}+\frac{1}{2}\omega \left( {{T}^{2}} \right)\approx 1.60355,\]
showing that \eqref{eq_upper_bound_2} is sharper than \eqref{eq_kitt_square}.

\item[(ii)] Now we provide an example to show that \eqref{eq_upper_bound_2} can be sharper than \eqref{eq_kitt_1}. Taking $T=\left[
\begin{array}{cc}
 2 & 0 \\
 1 & 5 \\
\end{array}
\right]$ yields
\[\frac{1}{2}{{\left\| T \right\|}^{2}}+\frac{1}{2}\omega \left( {{T}^{2}} \right)\approx 25.8742\;{\text{and}}\;\left(\frac{1}{2}\left\| \;|T|+|T^*|\;\right\|\right)^2\approx 26.018.\]
However, letting $T=\left[
\begin{array}{cc}
 3 & 0 \\
 4 & 1 \\
\end{array}
\right]$ gives
\[\frac{1}{2}{{\left\| T \right\|}^{2}}+\frac{1}{2}\omega \left( {{T}^{2}} \right)\approx 19.7967\;{\text{and}}\;\left(\frac{1}{2}\left\| \;|T|+|T^*|\;\right\|\right)^2\approx 19.4443.\]
\end{itemize}

\end{remark}
%=====================================================================================
If we set $A=T$ and $B={{T}^{*}}$, in Theorem \ref{7}, we obtain the following bound for $\omega(T).$
\begin{corollary}\label{9}
Let $T\in \mathbb B\left( \mathbb H \right)$. Then
\begin{equation}\label{18}
\omega \left( T \right)\le \frac{1}{2}\sqrt{\sqrt{2}\omega \left( {{\left| T \right|}^{2}}+{\rm i}{{\left| {{T}^{*}} \right|}^{2}} \right)+2\omega \left( {{T}^{2}} \right)}.
\end{equation}
\end{corollary}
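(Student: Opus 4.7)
The plan is to apply Theorem \ref{7} with $A=T$ and $B=T^{*}$ and then simplify both sides using the basic identities relating $T$ and $T^{*}$.

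First I would compute the left-hand side of the resulting inequality. Since $\langle T^{*}x,x\rangle=\overline{\langle Tx,x\rangle}$, the two squared moduli inside the supremum defining $\omega_{e}(T,T^{*})$ coincide, so
\[
\omega_{e}^{2}(T,T^{*})=\sup_{\|x\|=1}\bigl(|\langle Tx,x\rangle|^{2}+|\langle T^{*}x,x\rangle|^{2}\bigr)=2\omega^{2}(T).
\]
This is the key simplification that turns an $\omega_{e}$-bound into an $\omega$-bound.

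Next I would simplify the right-hand side. With $A=T$ and $B=T^{*}$, one has $\omega(A)=\omega(B)=\omega(T)$, so $\omega^{4}(A)+\omega^{4}(B)=2\omega^{4}(T)$ and $\sqrt{\omega^{4}(A)+\omega^{4}(B)}=\sqrt{2}\,\omega^{2}(T)$. Moreover $|A|^{2}=|T|^{2}$ and $|B|^{2}=|T^{*}|^{2}$, while $B^{*}A=T\,T=T^{2}$, so $\omega(B^{*}A)=\omega(T^{2})$ and $\omega(A)\omega(B)=\omega^{2}(T)$. Substituting these into Theorem \ref{7} yields
\[
2\omega^{2}(T)\le\sqrt{\sqrt{2}\,\omega^{2}(T)\,\omega\bigl(|T|^{2}+{\rm i}|T^{*}|^{2}\bigr)+2\omega^{2}(T)\,\omega(T^{2})}.
\]

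Finally I would square, divide by $\omega^{2}(T)$ (trivially verifying the bound when $T=0$), and take a square root to obtain the claimed inequality. No genuine obstacle arises here; the substance is entirely in Theorem \ref{7}, and the only point worth noting is the small but essential observation that $\omega_{e}(T,T^{*})=\sqrt{2}\,\omega(T)$, which is what converts the general two-operator bound into a refined numerical-radius inequality involving the Cartesian-style quantity $\omega(|T|^{2}+{\rm i}|T^{*}|^{2})$ together with $\omega(T^{2})$.
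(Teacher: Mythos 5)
Your proposal is correct and follows exactly the paper's route: the paper obtains Corollary \ref{9} precisely by setting $A=T$ and $B=T^{*}$ in Theorem \ref{7}, using $\omega_{e}(T,T^{*})=\sqrt{2}\,\omega(T)$, $\omega(T^{*})=\omega(T)$, and $B^{*}A=T^{2}$. Your write-up simply makes these substitutions explicit, including the harmless check for $T=0$.
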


\begin{remark}
\hfill
\begin{itemize}
\item[(i)] Notice that \eqref{18} is sharp. Indeed, if we assume that $T$ is a normal operator, we get the same quantity $\left\| T \right\|$ on both sides of the inequality.
\item[(ii)] Of course, the inequality in Corollary \ref{9} is stronger than the inequality in Corollary \ref{8}, since we have (see, e.g., \cite[Proposition 1.4]{4})
\[\omega \left( {{\left| T \right|}^{2}}+{\rm i}{{\left| {{T}^{*}} \right|}^{2}} \right)\le {{\left\| {{\left| T \right|}^{4}}+{{\left| {{T}^{*}} \right|}^{4}} \right\|}^{\frac{1}{2}}}.\]

\item[(iii)] We explain the advantage of the bound in \eqref{18}. If we let
$T=\left[
\begin{array}{cc}
 1 & 0 \\
 4 & 1 \\
\end{array}
\right],$ we can see that $\omega(T)=3$. Moreover,
\[ \frac{1}{2}\left\| \;|T|+|T^*|\;\right\|\approx 3.1305,\;{\text{and}}\;
\frac{1}{2}\sqrt{\sqrt{2}\omega \left( {{\left| T \right|}^{2}}+{\rm i}{{\left| {{T}^{*}} \right|}^{2}} \right)+2\omega \left( {{T}^{2}} \right)}\approx 3.00956,\]
which shows that \eqref{18} is sharper than \eqref{eq_kitt_1}, in this example. However, if we let $T=\left[
\begin{array}{cc}
 0 & 3 \\
 0 & 0 \\
\end{array}
\right]$, then \eqref{eq_kitt_1} is sharper than \eqref{18}, as we have $\omega(T)\approx 1.5,$
\[ \frac{1}{2}\left\| \;|T|+|T^*|\;\right\|= 1.5,\;{\text{and}}\;
\frac{1}{2}\sqrt{\sqrt{2}\omega \left( {{\left| T \right|}^{2}}+{\rm i}{{\left| {{T}^{*}} \right|}^{2}} \right)+2\omega \left( {{T}^{2}} \right)}\approx 1.78381.\]
\end{itemize}
\end{remark}

Letting $T=\left[ \begin{matrix}
   O & A  \\
   {{B}^{*}} & O  \\
\end{matrix} \right]$, in Corollary \ref{9}, obtain the following upper bound for the numerical radius of the operator matrix $ \left[ \begin{matrix}
   O & A  \\
   {{B}^{*}} & O  \\
\end{matrix} \right] .$
\begin{corollary}\label{19}
Let $A,B\in \mathbb B\left( \mathbb H \right)$. Then
\[\begin{aligned}
  & {{\omega }^{2}}\left( \left[ \begin{matrix}
   O & A  \\
   {{B}^{*}} & O  \\
\end{matrix} \right] \right) \\ 
 & \le \frac{\sqrt{2}}{4}\max \left\{ \omega \left( {{\left| {{A}^{*}} \right|}^{2}}+{\rm i}{{\left| {{B}^{*}} \right|}^{2}} \right),\omega \left( {{\left| A \right|}^{2}}+{\rm i}{{\left| B \right|}^{2}} \right) \right\}+\frac{1}{2}\max \left\{ \omega \left( A{{B}^{*}} \right),\omega \left( {{B}^{*}}A \right) \right\}.
 \end{aligned}\]
\end{corollary}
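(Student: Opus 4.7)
The plan is to apply Corollary \ref{9} with $T=\left[\begin{matrix} O & A \\ B^* & O \end{matrix}\right]$ and then to evaluate $\omega(|T|^{2}+{\rm i}|T^*|^{2})$ and $\omega(T^{2})$ by exploiting the block structure of $T$. Squaring \eqref{18} gives
\[
\omega^{2}(T)\le \frac{\sqrt{2}}{4}\omega\left(|T|^{2}+{\rm i}|T^*|^{2}\right)+\frac{1}{2}\omega\left(T^{2}\right),
\]
so the task reduces to expressing the two quantities on the right-hand side in terms of $A$ and $B$.

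A routine block multiplication with $T^*=\left[\begin{matrix} O & B \\ A^* & O \end{matrix}\right]$ gives
\[
|T|^{2}=T^*T=\left[\begin{matrix} |B^*|^{2} & O \\ O & |A|^{2} \end{matrix}\right],\qquad |T^*|^{2}=TT^*=\left[\begin{matrix} |A^*|^{2} & O \\ O & |B|^{2} \end{matrix}\right],
\]
and
\[
T^{2}=\left[\begin{matrix} AB^* & O \\ O & B^*A \end{matrix}\right].
\]
Consequently both $|T|^{2}+{\rm i}|T^*|^{2}$ and $T^{2}$ are block-diagonal. I would then invoke the elementary fact that $\omega\left(\left[\begin{matrix}X&O\\O&Y\end{matrix}\right]\right)=\max\{\omega(X),\omega(Y)\}$ (the $\ge$ direction by testing on unit vectors supported in a single block; the $\le$ direction by the triangle inequality together with the scaling $|\langle Xu,u\rangle|\le \|u\|^{2}\omega(X)$ applied to each block of a unit vector in $\mathbb{H}\oplus\mathbb{H}$). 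This immediately yields
\[
\omega(T^{2})=\max\{\omega(AB^*),\omega(B^*A)\},
\]
together with $\omega\left(|T|^{2}+{\rm i}|T^*|^{2}\right)=\max\{\omega(|B^*|^{2}+{\rm i}|A^*|^{2}),\omega(|A|^{2}+{\rm i}|B|^{2})\}$.

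The only mildly delicate point, and the closest thing to an obstacle, is matching the first of these two maxima with the form $\max\{\omega(|A^*|^{2}+{\rm i}|B^*|^{2}),\omega(|A|^{2}+{\rm i}|B|^{2})\}$ stated in the corollary. For this I will use the general identities $\omega(\lambda X)=\omega(X)$ for $|\lambda|=1$ and $\omega(X)=\omega(X^*)$. Since $|A^*|^{2}$ and $|B^*|^{2}$ are self-adjoint, one has
\[
-{\rm i}\bigl(|B^*|^{2}+{\rm i}|A^*|^{2}\bigr)=|A^*|^{2}-{\rm i}|B^*|^{2}=\bigl(|A^*|^{2}+{\rm i}|B^*|^{2}\bigr)^{*},
\]
so $\omega(|B^*|^{2}+{\rm i}|A^*|^{2})=\omega(|A^*|^{2}+{\rm i}|B^*|^{2})$. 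Substituting the resulting expressions for $\omega(|T|^{2}+{\rm i}|T^*|^{2})$ and $\omega(T^{2})$ into the squared form of Corollary \ref{9} yields exactly the bound claimed in Corollary \ref{19}.
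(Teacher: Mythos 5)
Your proposal is correct and follows exactly the route the paper intends: the paper derives Corollary \ref{19} by substituting $T=\left[\begin{matrix} O & A \\ B^* & O \end{matrix}\right]$ into Corollary \ref{9}, which is precisely your argument. Your block computations of $|T|^2$, $|T^*|^2$, $T^2$, the identity $\omega\left(\left[\begin{matrix}X&O\\O&Y\end{matrix}\right]\right)=\max\{\omega(X),\omega(Y)\}$, and the rotation/adjoint step showing $\omega(|B^*|^2+{\rm i}|A^*|^2)=\omega(|A^*|^2+{\rm i}|B^*|^2)$ are all valid and simply supply the details the paper leaves implicit.
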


\begin{remark}\label{rem_main_block}
\hfill
\begin{itemize}
\item[(i)] The inequality in Corollary \ref{19} is sharp. Indeed, if we let $B^{*}=A$ be a normal operator, we get ${{\left\| A \right\|}^{2}}$ on both sides of the inequality.

\item[(ii)] Among the sharpest upper bounds for $\omega\left( \left[\begin{matrix}O&A\\B^*&O\end{matrix}\right]     \right)$ is $\frac{\|A\|+\|B\|}{2},$ as we see from Lemma \ref{lem_hirz}. In Corollary \ref{19}, we have found a new upper bound. We give two examples to show that neither bound is uniformly better. For, let 
\[A=\left[
\begin{array}{cc}
 5 & 0 \\
 2 & 5 \\
\end{array}
\right]\;{\text{and}}\;B=\left[
\begin{array}{cc}
 1 & 0 \\
 1 & 3\\
\end{array}
\right].\]
Then
\[\omega^2\left( \left[\begin{matrix}O&A\\B^*&O\end{matrix}\right]     \right)\approx 20.078,\left(\frac{\|A\|+\|B\|}{2}\right)^2\approx 21.5231,\]
and
\[\frac{\sqrt{2}}{4}\max \left\{ \omega \left( {{\left| {{A}^{*}} \right|}^{2}}+{\rm i}{{\left| {{B}^{*}} \right|}^{2}} \right),\omega \left( {{\left| A \right|}^{2}}+{\rm i}{{\left| B \right|}^{2}} \right) \right\}+\frac{1}{2}\max \left\{ \omega \left( A{{B}^{*}} \right),\omega \left( {{B}^{*}}A \right) \right\}\approx 22.4192.\]
On the other hand, if \[A=\left[
\begin{array}{cc}
 5 & 0 \\
 1 & 2 \\
\end{array}
\right]\;{\text{and}}\;B=\left[
\begin{array}{cc}
 5 & 4 \\
 4 & 0 \\
\end{array}
\right],\]
then
\[\omega^2\left( \left[\begin{matrix}O&A\\B^*&O\end{matrix}\right]     \right)\approx 36.25,\left(\frac{\|A\|+\|B\|}{2}\right)^2\approx 38.0298,\]
and
\[\frac{\sqrt{2}}{4}\max \left\{ \omega \left( {{\left| {{A}^{*}} \right|}^{2}}+{\rm i}{{\left| {{B}^{*}} \right|}^{2}} \right),\omega \left( {{\left| A \right|}^{2}}+{\rm i}{{\left| B \right|}^{2}} \right) \right\}+\frac{1}{2}\max \left\{ \omega \left( A{{B}^{*}} \right),\omega \left( {{B}^{*}}A \right) \right\}\approx 37.7279.\]
\end{itemize}
\end{remark}

\begin{proposition}\label{13}
Let $T\in \mathbb B\left( \mathbb H \right)$. Then
\[{{\left\| \Re T \right\|}^{2}}\le \frac{1}{4}\sqrt{\sqrt{2}{{\omega }^{2}}\left( T \right)\omega \left( {{\left| T \right|}^{2}}+{\rm i}{{\left| {{T}^{*}} \right|}^{2}} \right)+2{{\omega }^{2}}\left( T \right)\omega \left( {{T}^{2}} \right)}+\frac{1}{2}{{\omega }^{2}}\left( T \right).\]
\end{proposition}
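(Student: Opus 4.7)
The plan is to start from the pointwise inequality
\[|\langle Tx, x\rangle| \le \frac{1}{2}\sqrt{\langle(|T|^2+|T^*|^2)x,x\rangle + 2|\langle T^2 x, x\rangle|}\]
valid for any unit vector $x\in\mathbb{H}$; this inequality is precisely the one established in the remark preceding Theorem \ref{2} (it arises from \eqref{4} via the substitutions $x\mapsto Tx$, $y\mapsto T^*x$, $e\mapsto x$, and was the engine behind \eqref{29}). It is the natural starting point because the final bound involves exactly the quantities appearing inside this radical.

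I would then apply the elementary identity $(\Re z)^2 = \tfrac{1}{2}|z|^2 + \tfrac{1}{2}\Re(z^2)$, valid for every $z\in\mathbb{C}$, together with the trivial bound $\Re(z^2) \le |z|^2$, at $z=\langle Tx,x\rangle$. This yields
\[(\Re\langle Tx,x\rangle)^2 \le \tfrac{1}{2}|\langle Tx,x\rangle|^2 + \tfrac{1}{2}|\langle Tx,x\rangle|^2.\]
The decisive move is to bound the two copies of $|\langle Tx,x\rangle|^2$ \emph{asymmetrically}: for the first copy, use $|\langle Tx,x\rangle|^2 \le \omega^2(T)$; for the second, factor $|\langle Tx,x\rangle|^2 = |\langle Tx,x\rangle|\cdot|\langle Tx,x\rangle|$ and bound one factor by $\omega(T)$ and the other by the pointwise inequality above. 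This produces the per-vector estimate
\[(\Re\langle Tx,x\rangle)^2 \le \tfrac{1}{2}\omega^2(T) + \tfrac{\omega(T)}{4}\sqrt{\langle(|T|^2+|T^*|^2)x,x\rangle + 2|\langle T^2 x, x\rangle|}.\]

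The final step is to take the supremum over unit $x\in\mathbb{H}$. Since $\Re T$ is self-adjoint, the left-hand side becomes $\omega^2(\Re T) = \|\Re T\|^2$. On the right, monotonicity of $\sqrt{\cdot}$ and subadditivity of the supremum inside the square root give $\sup_{\|x\|=1}\bigl(\langle(|T|^2+|T^*|^2)x,x\rangle + 2|\langle T^2 x, x\rangle|\bigr) \le \||T|^2+|T^*|^2\| + 2\omega(T^2)$, after which the scalar inequality $a+b\le\sqrt{2}\sqrt{a^2+b^2}$ for $a,b\ge 0$ (applied to $a=\langle|T|^2x,x\rangle$ and $b=\langle|T^*|^2x,x\rangle$) upgrades $\||T|^2+|T^*|^2\|$ to $\sqrt{2}\,\omega(|T|^2+\mathrm{i}|T^*|^2)$. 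Absorbing the factor $\omega(T)/4$ into the radical as $\tfrac{1}{4}\sqrt{\omega^2(T)\,(\,\cdot\,)}$ rearranges everything into the form claimed in the proposition.

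I do not foresee a genuine obstacle; the only delicate point is the asymmetric treatment of the two $|\langle Tx,x\rangle|^2$ contributions in the middle step. Without this split, the pointwise estimate collapses to $(\Re\langle Tx,x\rangle)^2\le|\langle Tx,x\rangle|^2$, which after taking suprema returns only the trivial bound $\|\Re T\|^2\le\omega^2(T)$ and destroys the structural information one is trying to capture.
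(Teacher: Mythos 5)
Your proof is correct, and it lands on exactly the right expression, but it takes a genuinely more self-contained route than the paper. The paper first derives the general two-operator estimate \eqref{16}, namely $\omega^2(A+B)\le\sqrt{\sqrt{\omega^4(A)+\omega^4(B)}\,\omega(|A|^2+{\rm i}|B|^2)+2\omega(A)\omega(B)\omega(B^*A)}+2\omega(A)\omega(B)$, by expanding $|\langle(A+B)x,x\rangle|^2\le|\langle Ax,x\rangle|^2+|\langle Bx,x\rangle|^2+2|\langle Ax,x\rangle|\,|\langle Bx,x\rangle|$, feeding the first two terms into \eqref{12} (the pointwise form behind Theorem \ref{7}) and bounding the cross term by $2\omega(A)\omega(B)$; it then sets $A=T$, $B=T^*$, so that $\omega^2(A+B)=4\|\Re T\|^2$. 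Your identity $(\Re z)^2=\tfrac12|z|^2+\tfrac12\Re(z^2)\le\tfrac12|z|^2+\tfrac12|z|^2$ at $z=\langle Tx,x\rangle$ is precisely that expansion specialized to $B=A^*$ and divided by $4$, and your asymmetric split --- one half absorbed into $\tfrac12\omega^2(T)$, the other fed into the pointwise Buzano-type inequality behind \eqref{29} and then converted via $a+b\le\sqrt2\sqrt{a^2+b^2}$ into $\sqrt2\,\omega(|T|^2+{\rm i}|T^*|^2)$ --- reproduces exactly the two terms of \eqref{16} in this special case, after the factor $\omega(T)/4$ is pulled inside the radical. The difference is one of packaging: the paper's detour buys the reusable general inequality \eqref{16} as a by-product, whereas your argument needs only the scalar consequence \eqref{4} of Lemma \ref{1} and stays at the level of the single operator $T$, which is arguably cleaner for this particular corollary. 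Two cosmetic remarks: the detour through $\Re(z^2)$ is unnecessary (you only use $(\Re z)^2\le|z|^2$ before splitting), and the word ``upgrades'' is misleading since $\sqrt2\,\omega(|T|^2+{\rm i}|T^*|^2)$ dominates $\||T|^2+|T^*|^2\|$ rather than refining it --- but the inequality runs in the direction you need, so the argument stands.
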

\begin{proof}
It observes from the inequality \eqref{12} that
\[\begin{aligned}
  & {{\left| \left\langle \left( A+B \right)x,x \right\rangle  \right|}^{2}} \\ 
 & \le {{\left| \left\langle Ax,x \right\rangle  \right|}^{2}}+{{\left| \left\langle Bx,x \right\rangle  \right|}^{2}}+2\left| \left\langle Ax,x \right\rangle  \right|\left| \left\langle Bx,x \right\rangle  \right| \\ 
 & \le \sqrt{\sqrt{{{\omega }^{4}}\left( A \right)+{{\omega }^{4}}\left( B \right)}\;\omega \left( {{\left| A \right|}^{2}}+{\rm i}{{\left| B \right|}^{2}} \right)+2\omega \left( A \right)\omega \left( B \right)\omega \left( {{B}^{*}}A \right)}+2\left| \left\langle Ax,x \right\rangle  \right|\left| \left\langle Bx,x \right\rangle  \right| \\ 
 & \le \sqrt{\sqrt{{{\omega }^{4}}\left( A \right)+{{\omega }^{4}}\left( B \right)}\;\omega \left( {{\left| A \right|}^{2}}+{\rm i}{{\left| B \right|}^{2}} \right)+2\omega \left( A \right)\omega \left( B \right)\omega \left( {{B}^{*}}A \right)}+2\omega \left( A \right)\omega \left( B \right),
\end{aligned}\]
i.e.,
\[\begin{aligned}
  & {{\left| \left\langle \left( A+B \right)x,x \right\rangle  \right|}^{2}} \\ 
 & \le \sqrt{\sqrt{{{\omega }^{4}}\left( A \right)+{{\omega }^{4}}\left( B \right)}\;\omega \left( {{\left| A \right|}^{2}}+{\rm i}{{\left| B \right|}^{2}} \right)+2\omega \left( A \right)\omega \left( B \right)\omega \left( {{B}^{*}}A \right)}+2\omega \left( A \right)\omega \left( B \right) 
\end{aligned}\]
which implies 
\begin{equation}\label{16}
\begin{aligned}
  & {{\omega }^{2}}\left( A+B \right) \\ 
 & \le \sqrt{\sqrt{{{\omega }^{4}}\left( A \right)+{{\omega }^{4}}\left( B \right)}\;\omega \left( {{\left| A \right|}^{2}}+{\rm i}{{\left| B \right|}^{2}} \right)+2\omega \left( A \right)\omega \left( B \right)\omega \left( {{B}^{*}}A \right)}+2\omega \left( A \right)\omega \left( B \right). 
\end{aligned}
\end{equation}
If we placed $A=T$ and $B={{T}^{*}}$, in \ref{16}, we get the desired result.
\end{proof}
\begin{remark}
Replacing $T$ by $e^{{\textup{i}}\theta}T$ in Proposition \ref{13}, and using the fact that $\omega(T)=\sup_{\theta\in\mathbb{R}}\left\|\Re\left(e^{{\textup{i}}\theta}T\right)\right\|$ (see, e.g., \cite[(2.3)]{Yamazaki}), we obtain Corollary \ref{9}.
\end{remark}

%===========================================================================================
Now, if we replace $A$ by $T$ and $B$ by $T^*$ in Theorem \ref{thm_main_weee}, then we use the fact that $\omega_e(T,T^*)=\sqrt{2}\omega(T)$, we obtain the following, upon noting the equality \[\omega\left(|T|^2+{\textup{i}}|T^*|^2\right)=\omega\left(|T|^2-{\textup{i}}|T^*|^2\right).\]
\begin{corollary}\label{20}
Let $T\in \mathbb B\left( \mathbb H \right)$. Then
\[{{\omega }^{2}}\left( T \right)\le \frac{1}{2}\sqrt{{{\omega }^{2}}\left( {{\left| T \right|}^{2}}+{\rm i}{{\left| {{T}^{*}} \right|}^{2}} \right)+2{{\omega }^{2}}\left( T \right)\omega \left( {{T}^{2}} \right)}.\]
\end{corollary}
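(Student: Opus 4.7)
The plan is to carry out the substitution indicated by the author, namely apply Theorem \ref{thm_main_weee} with $A:=T$ and $B:=T^{*}$, and then simplify the resulting estimate using two elementary identities: one concerning the numerical radius of $|T|^{2}+{\rm i}|T^{*}|^{2}$ under conjugation/swap, and one computing $\omega_{e}(T,T^{*})$ in terms of $\omega(T)$.

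First I would evaluate the quantities appearing on the right-hand side of Theorem \ref{thm_main_weee} under this substitution. Since $|T^{*}|^{2}=TT^{*}=|(T^{*})|^{2}_{\text{as }B}$ and $|B^{*}|^{2}=|T|^{2}$, the first factor reads $\omega(|T|^{2}+{\rm i}|T^{*}|^{2})\,\omega(|T^{*}|^{2}+{\rm i}|T|^{2})$. The remaining pieces are immediate: $\omega(A)\omega(B)=\omega(T)\omega(T^{*})=\omega^{2}(T)$ and $B^{*}A=T\cdot T=T^{2}$, so the additive term becomes $2\omega^{2}(T)\omega(T^{2})$.

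Next I would establish the symmetry $\omega(|T^{*}|^{2}+{\rm i}|T|^{2})=\omega(|T|^{2}+{\rm i}|T^{*}|^{2})$. Writing $X:=|T|^{2}+{\rm i}|T^{*}|^{2}$, a direct computation gives $|T^{*}|^{2}+{\rm i}|T|^{2}={\rm i}(|T|^{2}-{\rm i}|T^{*}|^{2})={\rm i}X^{*}$, and since $\omega(\alpha Y)=|\alpha|\omega(Y)$ and $\omega(Y^{*})=\omega(Y)$, the two numerical radii coincide. Hence the first factor on the right collapses to $\omega^{2}(|T|^{2}+{\rm i}|T^{*}|^{2})$.

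Finally, on the left-hand side I would use the identity $\omega_{e}(T,T^{*})=\sqrt{2}\,\omega(T)$, which follows because $\langle T^{*}x,x\rangle=\overline{\langle Tx,x\rangle}$ and hence $|\langle Tx,x\rangle|^{2}+|\langle T^{*}x,x\rangle|^{2}=2|\langle Tx,x\rangle|^{2}$ for every unit vector $x$. Putting everything together produces
\[2\omega^{2}(T)\le \sqrt{\omega^{2}\!\left(|T|^{2}+{\rm i}|T^{*}|^{2}\right)+2\omega^{2}(T)\,\omega(T^{2})},\]
and dividing by $2$ yields the stated corollary. There is no real obstacle here; the only thing to watch is bookkeeping of $|B|^{2}$ versus $|B^{*}|^{2}$ when $B=T^{*}$ and the invariance of $\omega$ under the transformation $X\mapsto {\rm i}X^{*}$, both of which are routine.
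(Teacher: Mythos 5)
Your proposal is correct and follows exactly the route the paper indicates: substituting $A=T$, $B=T^{*}$ into Theorem \ref{thm_main_weee}, using $\omega_{e}(T,T^{*})=\sqrt{2}\,\omega(T)$, and invoking the identity $\omega\left(|T^{*}|^{2}+{\rm i}|T|^{2}\right)=\omega\left(|T|^{2}+{\rm i}|T^{*}|^{2}\right)$ via the transformation $X\mapsto {\rm i}X^{*}$. Your verification of the bookkeeping ($|B|^{2}=|T^{*}|^{2}$, $|B^{*}|^{2}=|T|^{2}$, $B^{*}A=T^{2}$) is accurate, so nothing further is needed.
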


\begin{remark}
\hfill
\begin{itemize}
\item[(i)] The inequality in Corollary \ref{20} is sharp. Indeed, if we assume that $T$ is a normal operator, we get the same quantity $\left\| T \right\|$ on both sides of the inequality.

\item[(ii)] In Corollary \ref{20}, we have found an upper bound for $\omega^2(T)$. Here, we provide two examples to show that neither this new bound nor the celebrated bound from \eqref{eq_kitt_1} is always better than the other. Indeed, if $T=\left[
\begin{array}{cc}
 1 & 2 \\
 0 & 4 \\
\end{array}
\right],$ then
\[\omega^2(T)\approx 18.5139, \left(\frac{1}{2}\left\|\; |T|+|T^*|\;\right\|\right)^2\approx 19.0656,\]
and
\[\frac{1}{2}\sqrt{{{\omega }^{2}}\left( {{\left| T \right|}^{2}}+{\rm i}{{\left| {{T}^{*}} \right|}^{2}} \right)+2{{\omega }^{2}}\left( T \right)\omega \left( {{T}^{2}} \right)}\approx 18.7755.\]
This shows that, for this example, the bound found in Corollary \ref{20} is sharper than that in \eqref{eq_kitt_1}. On the other hand, if we let $T=\left[
\begin{array}{cc}
 0 & 3 \\
 4 & 2 \\
\end{array}
\right],$ then
\[\omega^2(T)\approx 21.5301, \left(\frac{1}{2}\left\| \;|T|+|T^*|\;\right\|\right)^2\approx 21.5301,\]
and
\[\frac{1}{2}\sqrt{{{\omega }^{2}}\left( {{\left| T \right|}^{2}}+{\rm i}{{\left| {{T}^{*}} \right|}^{2}} \right)+2{{\omega }^{2}}\left( T \right)\omega \left( {{T}^{2}} \right)}\approx 21.5932.\]

\item[(iii)] In Corollary \ref{8}, we have found another upper bound for $\omega^2(T).$ If we let 
$T=\left[
\begin{array}{cc}
 1 & 1 \\
 5 & 1 \\
\end{array}
\right],$ then it can be verified that
\[\frac{1}{2}{{\left\| T \right\|}^{2}}+\frac{1}{2}\omega \left( {{T}^{2}} \right)\approx  19.7082,\;{\text{and}}\;\frac{1}{2}\sqrt{{{\omega }^{2}}\left( {{\left| T \right|}^{2}}+{\rm i}{{\left| {{T}^{*}} \right|}^{2}} \right)+2{{\omega }^{2}}\left( T \right)\omega \left( {{T}^{2}} \right)}\approx 17.282,\]
showing that the bound we found in Corollary \ref{20} is sharper than that we found in Corollary \ref{8}, for this example. However, letting $T=\left[
\begin{array}{cc}
 2 & 0 \\
 1 & 5 \\
\end{array}
\right]$ implies
\[\frac{1}{2}{{\left\| T \right\|}^{2}}+\frac{1}{2}\omega \left( {{T}^{2}} \right)\approx  25.8742,\;{\text{and}}\;\frac{1}{2}\sqrt{{{\omega }^{2}}\left( {{\left| T \right|}^{2}}+{\rm i}{{\left| {{T}^{*}} \right|}^{2}} \right)+2{{\omega }^{2}}\left( T \right)\omega \left( {{T}^{2}} \right)}\approx 25.881.\]

\item[(iv)] Letting $T=\left[
\begin{array}{cc}
 0 & 3 \\
 0 & 0 \\
\end{array}
\right]$ verifies that
{\footnotesize	
\[\frac{1}{2}\sqrt{{{\omega }^{2}}\left( {{\left| T \right|}^{2}}+{\rm i}{{\left| {{T}^{*}} \right|}^{2}} \right)+2{{\omega }^{2}}\left( T \right)\omega \left( {{T}^{2}} \right)}\approx 4.5\;{\text{and}}\;\left(\frac{1}{2}\sqrt{\sqrt{2}\omega \left( {{\left| T \right|}^{2}}+{\rm i}{{\left| {{T}^{*}} \right|}^{2}} \right)+2\omega \left( {{T}^{2}} \right)}\right)^2\approx 3.18198,\]
}
which shows that the bound we found in Corollary \ref{9} is sharper than the one found in Corollary \ref{20}, in this case.  
\end{itemize}
\end{remark}
%=========================================================================================
\begin{corollary}\label{17}
Let $A,B\in \mathbb B\left( \mathbb H \right)$. Then
{\small
\[\begin{aligned}
   {{\omega }^{2}}\left( \left[ \begin{matrix}
   O & A  \\
   {{B}^{*}} & O  \\
\end{matrix} \right] \right)&\le \frac{1}{4}\sqrt{\omega \left( {{\left| A \right|}^{2}}+{\rm i}{{\left| B \right|}^{2}} \right)\omega \left( {{\left| {{A}^{*}} \right|}^{2}}+{\rm i}{{\left| {{B}^{*}} \right|}^{2}} \right)+\omega \left( \left| A \right|+{\rm i}\left| B \right| \right)\omega \left( \left| {{A}^{*}} \right|+{\rm i}\left| {{B}^{*}} \right| \right)\omega \left( B{{A}^{*}} \right)} \\ 
 &\qquad +\frac{1}{4}\sqrt{\left\| \left| A \right|^2+\left| B \right|^2 \right\|\left\| \left| {{A}^{*}} \right|^2+\left| {{B}^{*}} \right|^2 \right\|}.  
\end{aligned}\]
}
\end{corollary}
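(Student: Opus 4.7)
The plan is to reduce Corollary \ref{17} to Lemma \ref{lem_hirz} together with Theorem \ref{thm_main_operee} and a double Cauchy-Schwarz argument. By Lemma \ref{lem_hirz},
\[
\omega^{2}\!\left(\left[\begin{matrix} O & A \\ B^{*} & O \end{matrix}\right]\right)
=\frac{1}{4}\sup_{\theta\in\mathbb{R}}\|A+e^{{\rm i}\theta}B\|^{2}
=\frac{1}{4}\sup_{\|x\|=\|y\|=1}\sup_{\theta\in\mathbb{R}}\bigl|\langle Ax,y\rangle+e^{{\rm i}\theta}\langle Bx,y\rangle\bigr|^{2}.
\]
Since $\sup_{\theta}|\alpha+e^{{\rm i}\theta}\beta|^{2}=(|\alpha|+|\beta|)^{2}$ for any $\alpha,\beta\in\mathbb{C}$, this equals $\tfrac{1}{4}\sup_{\|x\|=\|y\|=1}(|\langle Ax,y\rangle|+|\langle Bx,y\rangle|)^{2}$. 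Expanding the square as $(|\alpha|+|\beta|)^{2}=(|\alpha|^{2}+|\beta|^{2})+2|\alpha||\beta|$ splits the estimate into two pieces which I plan to bound separately so as to match exactly the two terms on the right-hand side of the claim.

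For the first piece, the supremum of $|\langle Ax,y\rangle|^{2}+|\langle Bx,y\rangle|^{2}$ over unit vectors $x,y$ is precisely $\|(A,B)\|_{e}^{2}$, and Theorem \ref{thm_main_operee} majorises this by the first square-root expression in Corollary \ref{17}. For the cross term $2|\langle Ax,y\rangle||\langle Bx,y\rangle|$ I first use the AM-GM bound $2|\alpha||\beta|\le|\alpha|^{2}+|\beta|^{2}$, reducing to the task of dominating $|\langle Ax,y\rangle|^{2}+|\langle Bx,y\rangle|^{2}$ by $\sqrt{\||A|^{2}+|B|^{2}\|\,\||A^{*}|^{2}+|B^{*}|^{2}\|}$.

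To produce this dominating expression, Cauchy-Schwarz applied to both $\langle Ax,y\rangle$ and $\langle x,A^{*}y\rangle$ gives $|\langle Ax,y\rangle|^{2}\le\|Ax\|\,\|A^{*}y\|=\sqrt{\langle|A|^{2}x,x\rangle\langle|A^{*}|^{2}y,y\rangle}$, and an identical bound holds for $B$. Summing these two and applying the elementary inequality $\sqrt{a_{1}b_{1}}+\sqrt{a_{2}b_{2}}\le\sqrt{a_{1}+a_{2}}\sqrt{b_{1}+b_{2}}$ (itself a Cauchy-Schwarz), one obtains
\[
|\langle Ax,y\rangle|^{2}+|\langle Bx,y\rangle|^{2}\le\sqrt{\langle(|A|^{2}+|B|^{2})x,x\rangle\,\langle(|A^{*}|^{2}+|B^{*}|^{2})y,y\rangle}\le\sqrt{\||A|^{2}+|B|^{2}\|\,\||A^{*}|^{2}+|B^{*}|^{2}\|}.
\]
Adding the two resulting bounds and dividing by $4$ gives Corollary \ref{17}. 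There is no substantial obstacle; the only decision is the clean splitting of $(|\alpha|+|\beta|)^{2}$ so that Theorem \ref{thm_main_operee} handles one summand while a double Cauchy-Schwarz handles the other.
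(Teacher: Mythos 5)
Your argument is correct and follows essentially the same route as the paper: both split $(|\langle Ax,y\rangle|+|\langle Bx,y\rangle|)^2$ into $|\langle Ax,y\rangle|^2+|\langle Bx,y\rangle|^2$ plus the cross term, control the first summand by the pointwise estimate behind Theorem \ref{thm_main_operee} (inequality \eqref{14}), dominate the cross term by $\sqrt{\left\||A|^2+|B|^2\right\|\left\||A^*|^2+|B^*|^2\right\|}$, and invoke Lemma \ref{lem_hirz} to convert the $\theta$-supremum into $\omega$ of the block matrix. The only cosmetic differences are that you take the $\theta$-supremum at the outset rather than at the end, and you reach the cross-term bound via $|\langle Ax,y\rangle|^2\le\|Ax\|\,\|A^*y\|$ instead of the paper's mixed Schwarz inequality followed by the arithmetic-geometric mean inequality; both are valid.
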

\begin{proof}
From \eqref{14}, we have
\[\begin{aligned}
  & {{\left| \left\langle \left( A+B \right)x,y \right\rangle  \right|}^{2}} \\ 
 & \le {{\left| \left\langle Ax,y \right\rangle  \right|}^{2}}+{{\left| \left\langle Bx,y \right\rangle  \right|}^{2}}+2\left| \left\langle Ax,y \right\rangle  \right|\left| \left\langle Bx,y \right\rangle  \right| \\ 
 &\quad \text{(by the triangle inequality)}\\
 & \le \sqrt{\omega \left( {{\left| A \right|}^{2}}+{\rm i}{{\left| B \right|}^{2}} \right)\omega \left( {{\left| {{A}^{*}} \right|}^{2}}+{\rm i}{{\left| {{B}^{*}} \right|}^{2}} \right)+\omega \left( \left| A \right|+{\rm i}\left| B \right| \right)\omega \left( \left| {{A}^{*}} \right|+{\rm i}\left| {{B}^{*}} \right| \right)\omega \left( B{{A}^{*}} \right)} \\ 
 &\qquad +2\left| \left\langle Ax,y \right\rangle  \right|\left| \left\langle Bx,y \right\rangle  \right| \\ 
 & \le \sqrt{\omega \left( {{\left| A \right|}^{2}}+{\rm i}{{\left| B \right|}^{2}} \right)\omega \left( {{\left| {{A}^{*}} \right|}^{2}}+{\rm i}{{\left| {{B}^{*}} \right|}^{2}} \right)+\omega \left( \left| A \right|+{\rm i}\left| B \right| \right)\omega \left( \left| {{A}^{*}} \right|+{\rm i}\left| {{B}^{*}} \right| \right)\omega \left( B{{A}^{*}} \right)} \\ 
  &\qquad +2\sqrt{\left\langle \left| A \right|x,x \right\rangle \left\langle \left| {{A}^{*}} \right|y,y \right\rangle }\sqrt{\left\langle \left| B \right|x,x \right\rangle \left\langle \left| {{B}^{*}} \right|y,y \right\rangle } \\
   &\quad \text{(by the mixed Schwarz inequality)}\\ 
  & \le \sqrt{\omega \left( {{\left| A \right|}^{2}}+{\rm i}{{\left| B \right|}^{2}} \right)\omega \left( {{\left| {{A}^{*}} \right|}^{2}}+{\rm i}{{\left| {{B}^{*}} \right|}^{2}} \right)+\omega \left( \left| A \right|+{\rm i}\left| B \right| \right)\omega \left( \left| {{A}^{*}} \right|+{\rm i}\left| {{B}^{*}} \right| \right)\omega \left( B{{A}^{*}} \right)} \\ 
  &\qquad +\left\langle \left| A \right|x,x \right\rangle \left\langle \left| {{A}^{*}} \right|y,y \right\rangle +\left\langle \left| B \right|x,x \right\rangle \left\langle \left| {{B}^{*}} \right|y,y \right\rangle \\ 
    &\quad \text{(by the arithmetic-geometric mean inequality)}\\ 
 & \le \sqrt{\omega \left( {{\left| A \right|}^{2}}+{\rm i}{{\left| B \right|}^{2}} \right)\omega \left( {{\left| {{A}^{*}} \right|}^{2}}+{\rm i}{{\left| {{B}^{*}} \right|}^{2}} \right)+\omega \left( \left| A \right|+{\rm i}\left| B \right| \right)\omega \left( \left| {{A}^{*}} \right|+{\rm i}\left| {{B}^{*}} \right| \right)\omega \left( B{{A}^{*}} \right)} \\ 
 &\qquad +\sqrt{\left( {{\left\langle \left| A \right|x,x \right\rangle }^{2}}+{{\left\langle \left| B \right|x,x \right\rangle }^{2}} \right)\left( {{\left\langle \left| {{A}^{*}} \right|y,y \right\rangle }^{2}}+{{\left\langle \left| {{B}^{*}} \right|y,y \right\rangle }^{2}} \right)}\\ 
  &\quad \text{(by the Cauchy-Schwarz inequality)}\\ 
 & \le \sqrt{\omega \left( {{\left| A \right|}^{2}}+{\rm i}{{\left| B \right|}^{2}} \right)\omega \left( {{\left| {{A}^{*}} \right|}^{2}}+{\rm i}{{\left| {{B}^{*}} \right|}^{2}} \right)+\omega \left( \left| A \right|+{\rm i}\left| B \right| \right)\omega \left( \left| {{A}^{*}} \right|+{\rm i}\left| {{B}^{*}} \right| \right)\omega \left( B{{A}^{*}} \right)} \\ 
 &\qquad +\sqrt{\left\langle \left( {{\left| A \right|}^{2}}+{{\left| B \right|}^{2}} \right)x,x \right\rangle \left\langle \left( {{\left| {{A}^{*}} \right|}^{2}}+{{\left| {{B}^{*}} \right|}^{2}} \right)y,y \right\rangle } \\ 
  &\quad \text{(by the Cauchy-Schwarz inequality)}\\ 
  & \le \sqrt{\omega \left( {{\left| A \right|}^{2}}+{\rm i}{{\left| B \right|}^{2}} \right)\omega \left( {{\left| {{A}^{*}} \right|}^{2}}+{\rm i}{{\left| {{B}^{*}} \right|}^{2}} \right)+\omega \left( \left| A \right|+{\rm i}\left| B \right| \right)\omega \left( \left| {{A}^{*}} \right|+{\rm i}\left| {{B}^{*}} \right| \right)\omega \left( B{{A}^{*}} \right)} \\ 
  &\qquad +\sqrt{\left\| \left| A \right|^2+\left| B \right|^2 \right\|\left\|\; \left| {{A}^{*}} \right|^2+\left| {{B}^{*}} \right|^2\right\|}. 
\end{aligned}\]
That is,
\[\begin{aligned}
  & {{\left| \left\langle \left( A+B \right)x,y \right\rangle  \right|}^{2}} \\ 
 & \le \sqrt{\omega \left( {{\left| A \right|}^{2}}+{\rm i}{{\left| B \right|}^{2}} \right)\omega \left( {{\left| {{A}^{*}} \right|}^{2}}+{\rm i}{{\left| {{B}^{*}} \right|}^{2}} \right)+\omega \left( \left| A \right|+{\rm i}\left| B \right| \right)\omega \left( \left| {{A}^{*}} \right|+{\rm i}\left| {{B}^{*}} \right| \right)\omega \left( B{{A}^{*}} \right)} \\ 
 &\qquad +\sqrt{\left\| \left| A \right|^2+\left| B \right|^2 \right\|\left\|\; \left| {{A}^{*}} \right|^2+\left| {{B}^{*}} \right|^2 \right\|}, 
\end{aligned}\]
which implies
\[\begin{aligned}
   {{\left\| A+B \right\|}^{2}}&\le \sqrt{\omega \left( {{\left| A \right|}^{2}}+{\rm i}{{\left| B \right|}^{2}} \right)\omega \left( {{\left| {{A}^{*}} \right|}^{2}}+{\rm i}{{\left| {{B}^{*}} \right|}^{2}} \right)+\omega \left( \left| A \right|+{\rm i}\left| B \right| \right)\omega \left( \left| {{A}^{*}} \right|+{\rm i}\left| {{B}^{*}} \right| \right)\omega \left( B{{A}^{*}} \right)} \\ 
 &\qquad +\sqrt{\left\| \left| A \right|^2+\left| B \right|^2 \right\|\left\| \left| {{A}^{*}} \right|^2+\left| {{B}^{*}} \right|^2\right\|}.  
\end{aligned}\]
Now, if we replace $B$ by ${{e}^{{\rm i}\theta }}B$, we obtain
{\small
\[\begin{aligned}
   \frac{1}{4}{{\left\| A+{{e}^{{\rm i}\theta }}B \right\|}^{2}}&\le \frac{1}{4}\sqrt{\omega \left( {{\left| A \right|}^{2}}+{\rm i}{{\left| B \right|}^{2}} \right)\omega \left( {{\left| {{A}^{*}} \right|}^{2}}+{\rm i}{{\left| {{B}^{*}} \right|}^{2}} \right)+\omega \left( \left| A \right|+{\rm i}\left| B \right| \right)\omega \left( \left| {{A}^{*}} \right|+{\rm i}\left| {{B}^{*}} \right| \right)\omega \left( B{{A}^{*}} \right)} \\ 
 &\qquad +\frac{1}{4}\sqrt{\left\| \left| A \right|^2+\left| B \right|^2 \right\|\left\| \left| {{A}^{*}} \right|^2+\left| {{B}^{*}} \right|^2 \right\|}.  
\end{aligned}\]
}
Now taking supremum over $\theta \in \mathbb{R}$, we conclude
{\small
\[\begin{aligned}
   {{\omega }^{2}}\left( \left[ \begin{matrix}
   O & A  \\
   {{B}^{*}} & O  \\
\end{matrix} \right] \right)&\le \frac{1}{4}\sqrt{\omega \left( {{\left| A \right|}^{2}}+{\rm i}{{\left| B \right|}^{2}} \right)\omega \left( {{\left| {{A}^{*}} \right|}^{2}}+{\rm i}{{\left| {{B}^{*}} \right|}^{2}} \right)+\omega \left( \left| A \right|+{\rm i}\left| B \right| \right)\omega \left( \left| {{A}^{*}} \right|+{\rm i}\left| {{B}^{*}} \right| \right)\omega \left( B{{A}^{*}} \right)} \\ 
 &\qquad +\frac{1}{4}\sqrt{\left\| \left| A \right|^2+\left| B \right|^2 \right\|\left\| \left| {{A}^{*}} \right|^2+\left| {{B}^{*}} \right|^2 \right\|},  
\end{aligned}\]
}
as required.
\end{proof}

If we set $A=T$ and $B={{T}^{*}}$, in Corollary \ref{17}, we get the following.
\begin{corollary}\label{21}
Let $T\in \mathbb B\left( \mathbb H \right)$. Then
\[{{\omega }^{2}}\left( T \right)\le \frac{1}{4}\left( \sqrt{{{\omega }^{2}}\left( {{\left| T \right|}^{2}}+{\rm i}{{\left| {{T}^{*}} \right|}^{2}} \right)+{{\omega }^{2}}\left( \left| T \right|+{\rm i}\left| {{T}^{*}} \right| \right)\omega \left( {{T}^{2}} \right)}+\left\| {{\left| T \right|}^{2}}+{{\left| {{T}^{*}} \right|}^{2}} \right\| \right).\]
\end{corollary}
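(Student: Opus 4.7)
The plan is to prove Corollary \ref{21} as a direct consequence of Corollary \ref{17} via the substitution $A = T$ and $B = T^{*}$, together with a few symmetry observations about the numerical radius.

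First, I would record the simplifications that this substitution triggers. Since $B = T^{*}$, we have $|A|^{2} = T^{*}T = |T|^{2}$, $|B|^{2} = TT^{*} = |T^{*}|^{2}$, $|A^{*}|^{2} = |T^{*}|^{2}$, $|B^{*}|^{2} = |T|^{2}$, and likewise $|A| = |T|$, $|B| = |T^{*}|$, $|A^{*}| = |T^{*}|$, $|B^{*}| = |T|$. Furthermore, $B A^{*} = T^{*}T^{*} = (T^{*})^{2}$, so $\omega(B A^{*}) = \omega((T^{*})^{2}) = \omega(T^{2})$ by the unitary-invariance-like identity $\omega(S^{*}) = \omega(S)$. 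Plugging into Corollary \ref{17} therefore produces an upper bound in which every norm/numerical radius appearing on the right is built from $|T|$, $|T^{*}|$, and $T^{2}$.

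Next I would handle the left-hand side and consolidate the symmetric pairs on the right. The operator matrix becomes $\bigl[\begin{smallmatrix}O&T\\T&O\end{smallmatrix}\bigr]$, and by Lemma \ref{lem_hirz} its numerical radius equals $\tfrac{1}{2}\sup_{\theta}\|T + e^{{\rm i}\theta}T^{*}\|$; using the classical identity $\omega(T) = \sup_{\theta}\|\Re(e^{{\rm i}\theta}T)\| = \tfrac{1}{2}\sup_{\theta}\|T + e^{{\rm i}\theta}T^{*}\|$, the left-hand side simplifies to $\omega^{2}(T)$. For the right-hand side, I would observe that for any self-adjoint operators $X, Y$ one has $Y + {\rm i}X = {\rm i}(X + {\rm i}Y)^{*}$, so that $\omega(Y + {\rm i}X) = \omega(X + {\rm i}Y)$. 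Applying this with $(X,Y) = (|T|^{2}, |T^{*}|^{2})$ and with $(X,Y) = (|T|, |T^{*}|)$ collapses the two $\omega$ factors in the first term under the square root into squares, yielding $\omega^{2}(|T|^{2}+{\rm i}|T^{*}|^{2}) + \omega^{2}(|T|+{\rm i}|T^{*}|)\,\omega(T^{2})$. The second (operator-norm) term collapses similarly because $\||A|^{2}+|B|^{2}\| = \||A^{*}|^{2}+|B^{*}|^{2}\| = \||T|^{2}+|T^{*}|^{2}\|$, so $\sqrt{\||A|^{2}+|B|^{2}\|\,\||A^{*}|^{2}+|B^{*}|^{2}\|}$ becomes $\||T|^{2}+|T^{*}|^{2}\|$.

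Assembling these pieces gives exactly
\[
\omega^{2}(T) \le \frac{1}{4}\sqrt{\omega^{2}\bigl(|T|^{2}+{\rm i}|T^{*}|^{2}\bigr) + \omega^{2}\bigl(|T|+{\rm i}|T^{*}|\bigr)\,\omega(T^{2})} \; + \; \frac{1}{4}\bigl\||T|^{2}+|T^{*}|^{2}\bigr\|,
\]
which is the desired inequality. The step most likely to need care is the symmetry reduction $\omega(Y+{\rm i}X) = \omega(X+{\rm i}Y)$ for self-adjoint $X,Y$: everything else is bookkeeping, but without this identity the two separate factors under the square root cannot be folded into clean squares, and the statement of the corollary would not match the output of the substitution.
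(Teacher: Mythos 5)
Your proposal is correct and follows exactly the route the paper takes: the paper obtains Corollary \ref{21} by the single remark ``set $A=T$ and $B=T^{*}$ in Corollary \ref{17},'' and your argument simply fills in the bookkeeping it leaves implicit. The symmetry identity $\omega(Y+{\rm i}X)=\omega({\rm i}(X+{\rm i}Y)^{*})=\omega(X+{\rm i}Y)$ for self-adjoint $X,Y$ that you flag as the delicate step is indeed the (unstated) reason the paired factors collapse into squares, and your verification of it is sound.
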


The inequality in Corollary \ref{21} is sharp. Indeed, if we assume that $T$ is a normal operator, we get the same quantity $\left\| T \right\|$ on both sides of the inequality.

\begin{corollary}\label{22}
Let $T\in \mathbb B\left( \mathbb H \right)$. Then
\[{{\omega }^{2}}\left( T \right)\le \left\| \left( \Re T,\Im T \right) \right\|_{e}^{2}\le {{\omega }^{2}}\left( \left| \Re T \right|+{\rm i}\left| \Im T \right| \right)\le \frac{1}{2}\left\| {{T}^{*}}T+T{{T}^{*}} \right\|.\]
\end{corollary}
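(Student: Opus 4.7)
The plan is to establish the three inequalities separately, using the Cartesian decomposition $T=\Re T+{\rm i}\Im T$ together with standard tools (mixed Schwarz, Cauchy--Schwarz, and the operator inequality $\langle Sx,x\rangle^{2}\le\langle S^{2}x,x\rangle$ for self-adjoint $S$ when $\|x\|=1$).

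\textbf{First inequality.} Since $\Re T$ and $\Im T$ are self-adjoint, $\langle \Re Tx,x\rangle$ and $\langle \Im Tx,x\rangle$ are real, so
\[|\langle Tx,x\rangle|^{2}=|\langle \Re Tx,x\rangle|^{2}+|\langle \Im Tx,x\rangle|^{2}\]
for every unit vector $x$. Bounding this by $|\langle \Re Tx,y\rangle|^{2}+|\langle \Im Tx,y\rangle|^{2}$ at $y=x$ and taking suprema using the alternative formula for $\|(\cdot,\cdot)\|_{e}$ gives $\omega^{2}(T)\le\|(\Re T,\Im T)\|_{e}^{2}$.

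\textbf{Second inequality.} For unit vectors $x,y\in\mathbb{H}$, the mixed Schwarz inequality applied to the self-adjoint operators $\Re T$ and $\Im T$ (where $|(\Re T)^{*}|=|\Re T|$ and similarly for $\Im T$) yields
\[|\langle \Re Tx,y\rangle|^{2}+|\langle \Im Tx,y\rangle|^{2}\le \langle |\Re T|x,x\rangle\langle |\Re T|y,y\rangle+\langle |\Im T|x,x\rangle\langle |\Im T|y,y\rangle.\]
Then apply the Cauchy--Schwarz inequality in $\mathbb{R}^{2}$ (exactly as used in several proofs earlier in the paper) to factor the right-hand side as
\[\sqrt{\langle |\Re T|x,x\rangle^{2}+\langle |\Im T|x,x\rangle^{2}}\,\sqrt{\langle |\Re T|y,y\rangle^{2}+\langle |\Im T|y,y\rangle^{2}},\]
and recognize each factor as $|\langle(|\Re T|+{\rm i}|\Im T|)\cdot,\cdot\rangle|$ via the identity $|a+{\rm i}b|=\sqrt{a^{2}+b^{2}}$ for $a,b\in\mathbb{R}$. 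Taking the supremum over unit $x,y$ produces $\omega^{2}(|\Re T|+{\rm i}|\Im T|)$ on the right.

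\textbf{Third inequality.} For any unit vector $x$ and self-adjoint $S$, Cauchy--Schwarz gives $\langle Sx,x\rangle^{2}\le\|Sx\|^{2}=\langle S^{2}x,x\rangle$. Apply this with $S=|\Re T|$ and $S=|\Im T|$ to obtain
\[|\langle(|\Re T|+{\rm i}|\Im T|)x,x\rangle|^{2}=\langle|\Re T|x,x\rangle^{2}+\langle|\Im T|x,x\rangle^{2}\le \langle((\Re T)^{2}+(\Im T)^{2})x,x\rangle,\]
so $\omega^{2}(|\Re T|+{\rm i}|\Im T|)\le\|(\Re T)^{2}+(\Im T)^{2}\|$. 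A direct computation with the Cartesian decomposition gives $(\Re T)^{2}+(\Im T)^{2}=\tfrac{1}{2}(T^{*}T+TT^{*})$, completing the chain.

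The only even mildly delicate step is the second inequality, where one must correctly invoke the mixed Schwarz bound for self-adjoint operators and then apply a vector Cauchy--Schwarz to repackage the result as a numerical-radius expression of $|\Re T|+{\rm i}|\Im T|$; the first and third inequalities reduce to the Cartesian decomposition and an algebraic identity.
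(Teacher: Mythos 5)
Your proof is correct, but it is organized differently from the paper's. The paper obtains the chain by citing three external facts: the inequality $\left\| \left( A,B \right) \right\|_{e}\le \sqrt{\omega \left( \left| A \right|+{\rm i}\left| B \right| \right)\omega \left( \left| {{A}^{*}} \right|+{\rm i}\left| {{B}^{*}} \right| \right)}$ from \cite[Theorem 2.3]{5}, the elementary bound ${{\omega }_{e}}\left( A,B \right)\le {{\left\| \left( A,B \right) \right\|}_{e}}$ together with $\omega \left( T \right)={{\omega }_{e}}\left( \Re T,\Im T \right)$ for the first link, and (implicitly) \cite[Proposition 1.4]{4} for the passage from ${{\omega }^{2}}\left( \left| \Re T \right|+{\rm i}\left| \Im T \right| \right)$ to $\left\| {{\left( \Re T \right)}^{2}}+{{\left( \Im T \right)}^{2}} \right\|$. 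You instead prove each link from scratch: the first by writing ${{\left| \left\langle Tx,x \right\rangle  \right|}^{2}}={{\left| \left\langle \Re Tx,x \right\rangle  \right|}^{2}}+{{\left| \left\langle \Im Tx,x \right\rangle  \right|}^{2}}$ and comparing with the two-variable supremum defining $\|(\cdot,\cdot)\|_e$; the second by combining the mixed Schwarz inequality for the self-adjoint operators $\Re T,\Im T$ with Cauchy--Schwarz in $\mathbb{R}^2$ and the identity $\left| a+{\rm i}b \right|=\sqrt{{{a}^{2}}+{{b}^{2}}}$, which amounts to re-deriving the cited inequality \eqref{24} in the self-adjoint case where $|A^*|=|A|$; and the third via $\left\langle Sx,x \right\rangle^{2}\le \left\langle {{S}^{2}}x,x \right\rangle$ and the algebraic identity ${{\left( \Re T \right)}^{2}}+{{\left( \Im T \right)}^{2}}=\tfrac{1}{2}\left( {{T}^{*}}T+T{{T}^{*}} \right)$. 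Each step checks out, so your argument buys a self-contained proof at the cost of length, while the paper's buys brevity by leaning on its companion results; the underlying mechanisms (mixed Schwarz plus vector Cauchy--Schwarz) are ultimately the same ones used in the references the paper invokes.
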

\begin{proof}
Very recently, it has been shown in \cite[Theorem 2.3]{5} that if $A,B\in \mathbb B\left( \mathbb H \right)$, then
\begin{equation}\label{24}
{{\left\| \left( A,B \right) \right\|}_{e}}\le \sqrt{\omega \left( \left| A \right|+{\rm i}\left| B \right| \right)\omega \left( \left| {{A}^{*}} \right|+{\rm i}\left| {{B}^{*}} \right| \right)}.
\end{equation}
On the other hand, we know that
	\[{{\omega }_{e}}\left( A,B \right)\le {{\left\| \left( A,B \right) \right\|}_{e}}.\]
Thus,
\begin{equation}\label{23}
{{\omega }_{e}}\left( A,B \right)\le {{\left\| \left( A,B \right) \right\|}_{e}}\le \sqrt{\omega \left( \left| A \right|+{\rm i}\left| B \right| \right)\omega \left( \left| {{A}^{*}} \right|+{\rm i}\left| {{B}^{*}} \right| \right)}.
\end{equation}
Assume that $T=\Re T+{\rm i}\Im T$ be the Cartesian decomposition of $T$. If we replace $A$ and $B$ by $\Re T$ and $\Im T$, and use the fact that
	\[\omega \left( T \right)={{\omega }_{e}}\left( \Re T,\Im T \right),\]
we infer that
\[\begin{aligned}
   {{\omega }^{2}}\left( T \right)&\le \left\| \left( \Re T,\Im T \right) \right\|_{e}^{2} \\ 
 & \le {{\omega }^{2}}\left( \left| \Re T \right|+{\rm i}\left| \Im T \right| \right) \\ 
 & \le \left\| {{\left| \Re T \right|}^{2}}+{{\left| \Im T \right|}^{2}} \right\| \\ 
 & =\left\| {{\left( \Re T \right)}^{2}}+{{\left( \Im T \right)}^{2}} \right\| \\ 
 & =\frac{1}{2}\left\| {{T}^{*}}T+T{{T}^{*}} \right\|, 
\end{aligned}\]
as required.
\end{proof}

As a direct consequence of the first and the second inequality in Corollary \ref{22}, we have the following interesting result.
\begin{corollary}
If $T\in \mathbb B\left( \mathbb H \right)$ is an accretive-dissipative operator, then ${{\left\| \left( \Re T,\Im T \right) \right\|}_{e}}=\omega \left( T \right)$.
\end{corollary}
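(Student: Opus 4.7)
The plan is to chase equality in the chain of inequalities already established in Corollary \ref{22}. Recall that $T$ being accretive-dissipative means precisely that both $\Re T \geq 0$ and $\Im T \geq 0$, i.e., both self-adjoint parts in the Cartesian decomposition $T = \Re T + \mathrm{i}\,\Im T$ are positive operators.

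The key observation is that for a positive operator $P$, one has $|P| = P$. Applying this to both $\Re T$ and $\Im T$ under the accretive-dissipative hypothesis yields
\[
|\Re T| + \mathrm{i}\,|\Im T| = \Re T + \mathrm{i}\,\Im T = T,
\]
and therefore $\omega(|\Re T| + \mathrm{i}\,|\Im T|) = \omega(T)$.

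With this identification in hand, Corollary \ref{22} gives
\[
\omega^{2}(T) \leq \|(\Re T,\Im T)\|_{e}^{2} \leq \omega^{2}(|\Re T|+\mathrm{i}\,|\Im T|) = \omega^{2}(T),
\]
so the outer terms coincide, forcing equality in the middle. Taking square roots yields $\|(\Re T,\Im T)\|_{e} = \omega(T)$, as required. There is no genuine obstacle: all the analytic work sits inside Corollary \ref{22}, and the proof only needs the trivial but essential remark that positivity of $\Re T$ and $\Im T$ identifies $|\Re T|+\mathrm{i}\,|\Im T|$ with $T$ itself.
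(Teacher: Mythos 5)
Your proposal is correct and is essentially identical to the paper's argument: the paper derives this corollary ``as a direct consequence of the first and the second inequality in Corollary \ref{22},'' which is exactly your squeeze $\omega^{2}(T)\le \left\| \left( \Re T,\Im T \right) \right\|_{e}^{2}\le \omega^{2}\left( \left| \Re T \right|+{\rm i}\left| \Im T \right| \right)=\omega^{2}(T)$, using that positivity of $\Re T$ and $\Im T$ gives $\left| \Re T \right|+{\rm i}\left| \Im T \right|=T$.
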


\begin{corollary}
If $T\in \mathbb B\left( \mathbb H \right)$ is a normal operator, then ${{\left\| \left( \Re T,\Im T \right) \right\|}_{e}}=\omega \left( \left| \Re T \right|+{\rm i}\left| \Im T \right| \right)=\left\| T \right\|$.
\end{corollary}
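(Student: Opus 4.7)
The statement asserts that for a normal operator $T$, the chain
\[{{\omega }^{2}}\left( T \right)\le \left\| \left( \Re T,\Im T \right) \right\|_{e}^{2}\le {{\omega }^{2}}\left( \left| \Re T \right|+{\rm i}\left| \Im T \right| \right)\le \frac{1}{2}\left\| {{T}^{*}}T+T{{T}^{*}} \right\|\]
from Corollary \ref{22} collapses to a single value, forcing the three named quantities to coincide. So the plan is to verify that the extreme ends of the chain agree, which is automatic from two standard facts about normal operators.

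The first step is to recall that for a normal operator $T$ one has $\omega(T)=\|T\|$; this is classical. The second step is to compute the right end of the chain: since $T$ is normal, $T^{*}T=TT^{*}$, and therefore $\tfrac{1}{2}\|T^{*}T+TT^{*}\|=\|T^{*}T\|=\|T\|^{2}$. Combining these two observations shows that the leftmost quantity $\omega^{2}(T)$ and the rightmost quantity $\tfrac{1}{2}\|T^{*}T+TT^{*}\|$ both equal $\|T\|^{2}$.

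The third and final step is to invoke Corollary \ref{22}: a chain of inequalities whose endpoints coincide must be an equality throughout, so
\[\left\| \left( \Re T,\Im T \right) \right\|_{e}^{2}=\omega ^{2}\left( \left| \Re T \right|+{\rm i}\left| \Im T \right| \right)=\|T\|^{2}.\]
Taking square roots (all quantities being nonnegative) yields the desired equalities
\[{{\left\| \left( \Re T,\Im T \right) \right\|}_{e}}=\omega \left( \left| \Re T \right|+{\rm i}\left| \Im T \right| \right)=\left\| T \right\|.\]
There is essentially no obstacle here: the only minor point to double-check is the factor $\tfrac{1}{2}$ at the right end of the Corollary \ref{22} chain versus the normality identity $T^{*}T=TT^{*}$, which matches perfectly. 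The whole proof is therefore a one-line consequence of Corollary \ref{22} together with the normality identities $\omega(T)=\|T\|$ and $T^{*}T=TT^{*}$.
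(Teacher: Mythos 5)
Your proof is correct and follows exactly the route the paper intends (the paper leaves this corollary without an explicit proof, presenting it immediately after Corollary \ref{22} as a consequence of that chain): for normal $T$ the endpoints of the chain in Corollary \ref{22} both equal $\|T\|^2$, since $\omega(T)=\|T\|$ and $\tfrac{1}{2}\|T^*T+TT^*\|=\|T^*T\|=\|T\|^2$, so all intermediate quantities coincide. Nothing is missing.
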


\begin{remark}
\hfill
\begin{itemize}
\item[(i)] From \eqref{23}, we have
	\[\sqrt{2}\omega \left( T \right)={{\omega }_{e}}\left( T,{{T}^{*}} \right)\le {{\left\| \left( T,{{T}^{*}} \right) \right\|}_{e}}\le \omega \left( \left| T \right|+{\rm i}\left| {{T}^{*}} \right| \right),\]
i.e.,
	\[\omega \left( T \right)\le \frac{\sqrt{2}}{2}{{\left\| \left( T,{{T}^{*}} \right) \right\|}_{e}}\le \frac{\sqrt{2}}{2}\omega \left( \left| T \right|+{\rm i}\left| {{T}^{*}} \right| \right).\]
Notice that in \cite[Corollary 2.2]{4}, it is proved that
	\[\omega \left( T \right)\le \frac{\sqrt{2}}{2}\omega \left( \left| T \right|+{\rm i}\left| {{T}^{*}} \right| \right).\]
	Thus, we have shown a refinement of this inequality in terms of $\|(\cdot,\cdot)\|_e.$
\item[(ii)] We have the following chain of inequalities
\[\begin{aligned}
   \frac{1}{2}\left\| T \right\|&\le \omega \left( \left[ \begin{matrix}
   O & \Re T  \\
   \Im T & O  \\
\end{matrix} \right] \right) \\ 
 & \le \frac{\sqrt{2}}{2}{{\left\| \left( \Re T,\Im T \right) \right\|}_{e}} \\ 
 & \le \frac{\sqrt{2}}{2}\omega \left( \left| \Re T \right|+{\rm i}\left| \Re T \right| \right) \\ 
 & \le \frac{\sqrt{2}}{2}{{\left\| {{\left( \Re T \right)}^{2}}+{{\left( \Im T \right)}^{2}} \right\|}^{\frac{1}{2}}} \\ 
 & \le \frac{\sqrt{2}}{2}\sqrt{{{\left\| \Re T \right\|}^{2}}+{{\left\| \Im T \right\|}^{2}}} \\ 
 & \le \omega \left( T \right).  
\end{aligned}\]
To prove this, we recall the following result, which has been shown recently in \cite[Theorem 2.1]{5} 
	\[\omega \left( \left[ \begin{matrix}
   O & A  \\
   {{B}^{*}} & O  \\
\end{matrix} \right] \right)\le \frac{\sqrt{2}}{2}{{\left\| \left( A,B \right) \right\|}_{e}}.\]
So, from \eqref{24}, we infer that
	\[\omega \left( \left[ \begin{matrix}
   O & A  \\
   {{B}^{*}} & O  \\
\end{matrix} \right] \right)\le \frac{\sqrt{2}}{2}{{\left\| \left( A,B \right) \right\|}_{e}}\le \frac{\sqrt{2}}{2}\sqrt{\omega \left( \left| A \right|+{\rm i}\left| B \right| \right)\omega \left( \left| {{A}^{*}} \right|+{\rm i}\left| {{B}^{*}} \right| \right)}.\]
Assume that $T=\Re T+{\rm i}\Im T$ is the Cartesian decomposition of $T$. 
Replace $A$ and $B$ by $\Re T$ and $\Im T$, in the above inequality, we get
\[\begin{aligned}
   \frac{1}{2}\left\| T \right\|&=\frac{1}{2}\left\| \Re T+{\rm i}\Im T \right\| \\ 
 & \le \omega \left( \left[ \begin{matrix}
   O & \Re T  \\
   {\rm i}\Im T & O  \\
\end{matrix} \right] \right) \quad \text{(by Lemma \ref{lem_hirz})}\\ 
 & =\omega \left( \left[ \begin{matrix}
   O & \Re T  \\
   \Im T & O  \\
\end{matrix} \right] \right) \\ 
 & \le \frac{\sqrt{2}}{2}{{\left\| \left( \Re T,\Im T \right) \right\|}_{e}} \\ 
 & \le \frac{\sqrt{2}}{2}\omega \left( \left| \Re T \right|+{\rm i}\left| \Im T \right| \right).  
\end{aligned}\]
On the other hand,
\[\begin{aligned}
   \omega \left( \left| \Re T \right|+{\rm i}\left| \Im T \right| \right)&\le {{\left\| {{\left( \Re T \right)}^{2}}+{{\left( \Im T \right)}^{2}} \right\|}^{\frac{1}{2}}} \quad \text{(by \cite[Proposition 1.4]{4})}\\ 
 & \le \sqrt{{{\left\| \Re T \right\|}^{2}}+{{\left\| \Im T \right\|}^{2}}} \\ 
 &\quad \text{(by the triangle inequality for the usual operator norm)}\\
 & \le \sqrt{2}\omega \left( T \right).  
\end{aligned}\]
Combining the last two relations implies the desired chain of inequalities.
\item[(iii)] Of course, if $T$ is an accretive-dissipative operator, one can write
\[\begin{aligned}
   \frac{\sqrt{2}}{2}\left\| T \right\|&\le \sqrt{2}\omega \left( \left[ \begin{matrix}
   O & \Re T  \\
   \Im T & O  \\
\end{matrix} \right] \right) \\ 
 & \le {{\left\| \left( \Re T,\Im T \right) \right\|}_{e}} \\ 
 & \le \omega \left( T \right).  
\end{aligned}\]
This provides a refinement of the inequality $\|T\|\leq\sqrt{2}\omega(T)$, valid for accretive-dissipative operators, as shown in \cite{4}.
\end{itemize}
\end{remark}

We conclude with the following bound of $\omega \left( \left[ \begin{matrix}
   O & A  \\
   {{B}^{*}} & O  \\
\end{matrix} \right] \right)$ in terms of $\|(A,B)\|_e.$
\begin{theorem}
Let $A,B\in \mathbb B\left( \mathbb H \right)$. Then
\[\omega \left( \left[ \begin{matrix}
   O & A  \\
   {{B}^{*}} & O  \\
\end{matrix} \right] \right)\le {{\left\| \left( A,B \right) \right\|}_{e}}-\frac{\left| \;\left\| A \right\|-\left\| B \right\| \;\right|}{2}.\]
\end{theorem}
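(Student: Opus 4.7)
The plan is to combine the second assertion of Lemma~\ref{lem_hirz} with a trivial lower bound for $\|(A,B)\|_e$ in terms of $\|A\|$ and $\|B\|$. The statement looks elaborate because of the correction term $\frac{|\,\|A\|-\|B\|\,|}{2}$, but once we realize that
\[
\max\{\|A\|,\|B\|\}=\frac{\|A\|+\|B\|}{2}+\frac{|\,\|A\|-\|B\|\,|}{2},
\]
the inequality reduces to two ingredients that are already at our disposal.

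First, I invoke Lemma~\ref{lem_hirz} directly to obtain
\[
\omega\!\left(\begin{bmatrix}O & A\\ B^{*} & O\end{bmatrix}\right)\le\frac{\|A\|+\|B\|}{2}.
\]
Second, from the very definition of the Euclidean operator norm, choosing $(\lambda_{1},\lambda_{2})=(1,0)$ and then $(\lambda_{1},\lambda_{2})=(0,1)$ in the supremum over $\mathbb{B}_{2}$, I get $\|(A,B)\|_{e}\ge\|A\|$ and $\|(A,B)\|_{e}\ge\|B\|$, hence
\[
\|(A,B)\|_{e}\ge\max\{\|A\|,\|B\|\}=\frac{\|A\|+\|B\|}{2}+\frac{|\,\|A\|-\|B\|\,|}{2}.
\]
Rearranging gives $\|(A,B)\|_{e}-\frac{|\,\|A\|-\|B\|\,|}{2}\ge\frac{\|A\|+\|B\|}{2}$, and combining with the first inequality proves the claim.

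Because the proof is so short, there is no real obstacle; the only thing to watch for is presenting the identity $\max\{a,b\}=\tfrac{a+b}{2}+\tfrac{|a-b|}{2}$ clearly so that the algebraic cancellation is transparent. It is worth remarking in passing that the bound is sharp at $B=0$: both sides reduce to $\tfrac{1}{2}\|A\|$, which shows that the subtracted term $\tfrac{|\,\|A\|-\|B\|\,|}{2}$ cannot be improved by any additive constant depending only on $\|A\|$ and $\|B\|$. This sharpness remark could be included after the proof as a one-line observation.
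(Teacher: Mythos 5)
Your proof is correct and follows essentially the same route as the paper: both combine the bound $\omega\left(\left[\begin{smallmatrix}O&A\\B^*&O\end{smallmatrix}\right]\right)\le\frac{\|A\|+\|B\|}{2}$ from Lemma~\ref{lem_hirz} with the lower bound $\|(A,B)\|_e\ge\max\{\|A\|,\|B\|\}$ and the identity $\max\{a,b\}=\frac{a+b}{2}+\frac{|a-b|}{2}$. The only cosmetic difference is that you obtain the lower bound on $\|(A,B)\|_e$ directly from the definition by choosing $(\lambda_1,\lambda_2)=(1,0)$ and $(0,1)$, whereas the paper uses the equivalent supremum formula over unit vectors $x,y$; your added sharpness observation at $B=O$ is also correct.
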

\begin{proof}
Let $x,y \in \mathbb H$ be two unit vectors. One can easily see that
	\[\sqrt{{{\left| \left\langle Ax,y \right\rangle  \right|}^{2}}+{{\left| \left\langle Bx,y \right\rangle  \right|}^{2}}}\ge \left| \left\langle Ax,y \right\rangle  \right|,\sqrt{{{\left| \left\langle Ax,y \right\rangle  \right|}^{2}}+{{\left| \left\langle Bx,y \right\rangle  \right|}^{2}}}\ge \left| \left\langle Bx,y \right\rangle  \right|\]
which implies,
	\[\max \left\{ \left\| A \right\|,\left\| B \right\| \right\}\le {{\left\| \left( A,B \right) \right\|}_{e}}.\]
Notice that
	\[\begin{aligned}
   \omega \left( \left[ \begin{matrix}
   O & A  \\
   {{B}^{*}} & O  \\
\end{matrix} \right] \right)+\frac{\left| \left\| A \right\|-\left\| B \right\| \right|}{2}&\le \frac{1}{2}\left( \left\| A \right\|+\left\| B \right\| \right)+\frac{\left| \;\left\| A \right\|-\left\| B \right\| \;\right|}{2} \\ 
 & =\max \left\{ \left\| A \right\|,\left\| B \right\| \right\}.  
\end{aligned}\]
Combining the last two inequalities implies the desired result.
\end{proof}

\subsection*{Declarations}
\begin{itemize}
\item {\bf{Availability of data and materials}}: Not applicable.
\item {\bf{Conflict of interest}}: The authors declare that they have no conflict of interest.
\item {\bf{Funding}}: Not applicable.
\item {\bf{Authors' contributions}}: Authors declare that they have contributed equally to this paper. All authors have read and approved this version.
\item {\bf{Acknowledgments}}: Not applicable.
\end{itemize}

\vskip 0.3 true cm 	
{\tiny (M. Sababheh) Department of Basic Sciences, Princess Sumaya University for Technology, Amman, Jordan}
	
{\tiny\textit{E-mail address:} sababheh@psut.edu.jo; sababheh@yahoo.com}

\vskip 0.3 true cm 	
{\tiny (H. R. Moradi) Department of Mathematics, Mashhad Branch, Islamic Azad University, Mashhad, Iran
	
\textit{E-mail address:} hrmoradi@mshdiau.ac.ir}
\end{document}